\documentclass[11pt,reqno]{amsart}
\usepackage[T1]{fontenc}
\usepackage{lmodern,amsmath,amssymb,mathtools,booktabs,array,enumitem,microtype}
\usepackage[hidelinks]{hyperref}
\newtheorem{theorem}{Theorem}[section]
\newtheorem{proposition}[theorem]{Proposition}
\newtheorem{lemma}[theorem]{Lemma}
\newtheorem{corollary}[theorem]{Corollary}
\theoremstyle{definition}
\newtheorem{example}[theorem]{Example}

\numberwithin{equation}{section}
\newcommand{\norm}[1]{\left\lVert#1\right\rVert}
\newcommand{\ind}{\chi}
\newcommand{\esssup}{\operatorname*{ess\,sup}}
\newcommand{\distfr}{\operatorname{dist}_{\mathrm{fr}}}
\newcommand{\tactive}{\tau_{\mathrm{active}}}
\newcommand{\calS}{\mathcal S}
\newcommand{\calP}{\mathcal P}
\newcommand{\calQ}{\mathcal Q}
\newcommand{\wideI}{\widehat I}
\newcommand{\dd}{\,d}
\newcommand{\e}{\mathrm e}
\newcommand{\eps}{\varepsilon}
\newcommand{\rank}{\operatorname{rank}}

\setlist[enumerate]{label=\textnormal{(\roman*)},leftmargin=2em}
\hypersetup{pdftitle={Input-Endpoint Compactness and Essential Norms for Weighted Bilinear Hardy Operators},pdfauthor={Saikat Kanjilal}}

\title[Input-endpoint compactness of bilinear Hardy operators]{Input-Endpoint Compactness and Essential Norms for Weighted Bilinear Hardy Operators}
\author{Saikat Kanjilal}
\address{Department of Mathematics, JIS University, Kolkata - 700109, West Bengal, India}
\email{saikat.kanjilal.07@gmail.com}
\urladdr{https://orcid.org/0000-0002-4359-8343}
\date{}
\begin{document}
\subjclass[2020]{Primary 26D15, 47H60; Secondary 46E30, 47H08}
\keywords{Weighted bilinear Hardy operator, compactness, input endpoints, essential norm, Stieltjes measures, quasi-Banach spaces, measure of noncompactness}
\begin{abstract}
We study compactness of the product of two weighted Hardy primitives into
$L^q(u)$, $0<q<\infty$, on the input boundary
$1\le p_1,p_2\le\infty$, where at least one input exponent is $1$ or
$\infty$. For $L^1\times L^p$, $1<p<\infty$, boundedness implies
compactness when $q<p/(p+1)$. For $L^1\times L^1$, the corresponding
range is $q<1/2$. At and above these boundaries compactness is
characterized by additional active-tail decay, and critical power
weights give bounded noncompact examples. The endpoint reductions use
cumulative Stieltjes measures with their initial atoms and strict
Hardy--Copson brackets. An output-mass allocation argument proves local
finite-rank density, including quasi-Banach targets. Applying the
localization principles developed for the interior-input regime identifies
the essential distance, finite-rank distance and Hausdorff noncompactness
radius with the coupled active tail, with the appropriate metric power
when $q<1$. For a finite left endpoint, ordinary $L^\infty$ inputs admit
an exact scalar transfer completed by linear nodal approximation.
\end{abstract}

\maketitle
\section{Introduction}\label{sec:intro}

The weighted inequality for the product of two Hardy primitives asks
when
\begin{equation}\label{eq:introinequality}
 \left(\int_a^b\left|\int_a^xf(t)\,dt\right|^q
                    \left|\int_a^xg(t)\,dt\right|^qu(x)\,dx\right)^{1/q}
 \le C\norm f_{L^{p_1}(v_1)}\norm g_{L^{p_2}(v_2)}.
\end{equation}
Its boundedness theory has developed through direct weighted criteria,
iteration, and reductions to linear Hardy-type inequalities. The work
of Aguilar Ca\~nestro, Ortega Salvador and Ram\'irez Torreblanca
\cite{AOR2012} and the iteration method of K\v repela
\cite{Krepela2017} are important points of reference. Broad input
endpoint boundedness is already present in the multidimensional theory
of Bilgi\c cli, Mustafayev and \"Unver \cite{BMU2020}. The question
addressed here is which bounded operators in \eqref{eq:introinequality}
have relatively compact images of the product unit ball when at least
one input exponent is $1$ or $\infty$.

The distinction between boundedness and compactness is particularly
visible at an $L^1$ input. For the mixed face $L^1\times L^p$,
$1<p<\infty$, and for the double endpoint $L^1\times L^1$, the
relevant compactness thresholds are respectively
\begin{equation}\label{eq:introthresholds}
 q=\frac p{p+1}\qquad\hbox{and}\qquad q=\frac12.
\end{equation}
Below these values boundedness implies compactness. At and above them
additional decay of the endpoint tails is necessary, and bounded
noncompact examples occur. The numerical boundaries reflect the
harmonic exponent relation already familiar from boundedness theory.
The result concerns the compactness behavior around those boundaries,
including the endpoint realization required to prove it.

Our scope is exactly the finite-$q$ input boundary
\[
 0<q<\infty,\qquad 1\le p_1,p_2\le\infty,\qquad
 \{p_1,p_2\}\cap\{1,\infty\}\ne\varnothing.
\]
The full $L^\infty$ faces are considered only when $a>-\infty$.
At these inputs we use the ordinary essential-supremum norm. This
convention and interval qualification are needed for a meaningful
scalar transfer. A density-weighted $L^p$ norm at finite $p$ must not
be continued to a multiplicatively weighted supremum without changing
the operator problem.

\subsection{Endpoint structure}
For $1<p<\infty$ the primitive-evaluation norm is a dual-weight
integral. At $p=1$ it becomes the running essential supremum
\[
 A_1^-(x)=\esssup_{a<t<x}v(t)^{-1}.
\]
The resulting monotone function can have jumps, a singular continuous
part, and a nonzero initial limit. The quasi-Banach endpoint reduction
therefore naturally involves a cumulative Stieltjes measure. In
particular, a constant nonzero evaluation profile contributes an
initial atom. Deleting this mass makes even elementary nonzero
operators disappear from the transformed criterion.

We construct a compactness-ready representation that keeps these
masses and the ordered endpoint brackets. For $L^1\times L^p$, the
two scalar reductions lead to a positive Hardy--Copson operator from
$L^{1/q}(\mu)$ to $L^{p/(p-q)}(\nu)$ when $q<1$. For two $L^1$
inputs, both measures are of endpoint Stieltjes type and the
transformed exponents are $1/q$ and $1/(1-q)$. The comparison of the
two transformed exponents gives \eqref{eq:introthresholds}.
In the lower regime the integral characteristics disappear under
shrinking output tails. The proof must establish integrability first
and preserve the possible initial output mass in the limiting step.

The compactness conclusion also needs local approximation. The
interior argument based on uniformly small dual-weight increments
cannot simply be substituted at $p=1$. A running essential supremum
need not have small increments uniformly over a fine partition.
Instead we control the error of a primitive sampler by distributing
input masses against the finite atomless measure
$u(x)/U(x)\,dx$, where $U(x)=\int_x^{b_u}u$ and $b_u$ is the active
right endpoint of the target. This proves local finite-rank density
for finite input endpoints, including quasi-Banach targets.
The sampler formula itself is related to the interior construction.
The endpoint content is its error estimate and mass allocation.

The $L^\infty$ section has a different mechanism. Normalizing a
primitive by $x-a$ produces a uniformly bounded multiplier that is
uniformly Lipschitz on each strict middle interval. The remaining
factor is the scalar map $T_pg=(x-a)Hg$. A fixed linear nodal
interpolant for the multiplier, multiplied by a finite-rank linear
scalar sampler, gives a bilinear finite-rank approximant. This
realization upgrades the exact equality of all output-set norms to
exact quantitative compactness transfer. It covers $p=1$ as well as
$p>1$. The double-$L^\infty$ corner is an explicit completion corollary.

\subsection{Relation to existing theory}
The reductions rely on established scalar Hardy criteria, particularly
Sinnamon--Stepanov \cite{SinnamonStepanov1996}, the normal-form
framework of Sinnamon \cite{Sinnamon2022}, and the abstract $p=1$
theory of Santacruz Hidalgo \cite{Santacruz2024}. General-measure
Hardy and Copson theorems are background here. Their application
requires translating strict and inclusive sets and retaining masses,
including a left-initial mass on an adjoined ordered endpoint.
The density superposition results of Gogatishvili, Mihula, Pick,
Hana Tur\v cinov\'a and \"Unver \cite{GMP2022}, together with
Gogatishvili--\"Unver \cite{GU2024,GU2025}, provide closely related
boundedness machinery. We do not interpret a density theorem as an
atomic-measure theorem without the normal-form translation.

The linear compactness literature already gives substantial guidance.
Edmunds--Gurka--Pick \cite{EGP1994} and Edmunds--Stepanov
\cite{ES1994} study compactness and quantitative approximation of
linear Hardy-type or Volterra maps. Eveson \cite{Eveson1995} treats
linear integral-operator compactness at $L^1$ and $L^\infty$.
Lomakina \cite{Lomakina2009} develops linear Hardy approximation in
quasi-Banach targets, and Lomakina--Nasyrova \cite{LN2026} establish
Lorentz-space compactness and approximation results. These works
preclude treating either endpoint compactness or quasi-Banach sampling
as a new general principle. The local upper bounds in this paper are
specific bilinear endpoint estimates. No sharp global approximation
asymptotics are claimed.

Further nearby developments concern distinct operators or conclusions.
Stepanov--Shambilova \cite{SS2019} study iterated and bilinear
Hardy-type boundedness with kernels. Stepanov--Ushakova \cite{SU2022}
treat a linear rectangular two-dimensional Hardy operator, whose
output geometry differs from the same-variable product studied here.
Mohanty--Jain--Jain \cite{MJJ2025} characterize boundedness for a
mixed-direction Hardy product. Recent kernel and discrete boundedness
results include Kalybay--Temirkhanova \cite{KT2026} and
Oinarov--Persson--Omarbayeva--Singh \cite{OPOS2026}.
Edmunds--Meskhi--Natelashvili \cite{EMN2025} give boundedness criteria
for multilinear Riemann--Liouville operators, providing additional
recent multilinear background. None of these boundedness results is
used as a compactness theorem for the present endpoint map.
General multilinear noncompactness also has an independent
interpolation theory. The recent work of Masty\l o and Brandani da
Silva \cite{MS2026} belongs to that setting. Our exact quantities come
from a localized tail representation rather than an interpolation
estimate.

Kanjilal \cite{B1} treats the interior input range
$1<p_1,p_2<\infty$ and develops the abstract localization principles
used here. The present paper concerns the input boundary and supplies
the endpoint-specific reductions, local approximation mechanisms,
compactness phase behavior and examples required to apply that
abstract theory. We state the inherited result once and cite its
proof. Thus the contribution is the endpoint realization, not the
abstract localization theorem or its essential-distance and
noncompactness arguments. To the best of our knowledge, the combined
endpoint compactness theory developed here does not appear in the
existing weighted bilinear Hardy literature.

\subsection{Organization}
Section~\ref{sec:framework} fixes the endpoint conventions and proves
local realization. Sections~\ref{sec:mixed} and~\ref{sec:double}
develop the two $L^1$ theorem blocks and their examples.
Section~\ref{sec:infinity} completes the $L^\infty$ faces.
Section~\ref{sec:synthesis} collects the quantitative conclusions and
the eight-cell phase table, distinguishing quasi-norm radii from
complete-metric radii.

\section{Endpoint framework and local realization}\label{sec:framework}

Let $I=(a,b)$, where $-\infty\le a<b\le\infty$. The input densities
$v_i$ are measurable and satisfy $0<v_i<\infty$ almost everywhere.
The output density $u$ is nonnegative and finite almost everywhere. It may
vanish on sets of positive measure. We use
\[
 X_i=L^{p_i}(v_i),\qquad
 \norm{f}_{L^p(v)}=\left(\int_I|f|^pv\right)^{1/p}\quad(1\le p<\infty).
\]
For $p=\infty$ the norm is the ordinary essential supremum of $|f|$.
In particular, the density is not multiplied into the essential supremum.
Throughout the paper $0<q<\infty$, and at least one of $p_1,p_2$ is $1$
or $\infty$. Define
\begin{equation}\label{eq:operator}
 Hf(x)=\int_a^x f(t)\,dt,\qquad B(f,g)(x)=Hf(x)Hg(x).
\end{equation}
The notation $B$ represents the operator denoted by $\mathcal H_2$ in
\cite{B1}. All output functions are equivalence classes for $u(x)\,dx$.

If $u=0$ almost everywhere, we define the target map to be zero and all
quantities below vanish. Otherwise set
\begin{equation}\label{eq:active}
 \begin{gathered} b_u=\inf\{d\in[a,b]:u=0\text{ a.e. on }(d,b)\},\\
 I_u=(a,b_u),\qquad U(x)=\int_x^{b_u}u(y)\,dy.\end{gathered}
\end{equation}
Only primitives on $I_u$ are needed. Where at least one input is
$L^\infty$, we assume $a>-\infty$. For finite inputs our local
admissibility hypothesis is that the evaluation profiles below are finite
at every $a<x<b_u$. This guarantees absolutely convergent primitives of
all inputs at those cuts.

\begin{proposition}[Evaluation profiles and cumulative measures]\label{prop:profiles}
The norm of $f\mapsto Hf(x)$ on the indicated input space is
\begin{equation}\label{eq:profiles}
 \begin{aligned}
 A_1^-(x)&=\esssup_{a<t<x}v(t)^{-1},\\
 A_p(x)&=\left(\int_a^xv(t)^{1-p'}\,dt\right)^{1/p'}
       &&(1<p<\infty),\\
 A_\infty(x)&=x-a &&(a>-\infty).
 \end{aligned}
\end{equation}
Write $A_{i,p_i}$ for the profile belonging to input $i$, using $A_{i,1}^-$
at an $L^1$ input. Nontrivial boundedness implies local admissibility and
\begin{equation}\label{eq:pointtest}
 0<U(x)<\infty,\qquad
 U(x)^{1/q}A_{1,p_1}(x)A_{2,p_2}(x)\le\norm B
 \quad(a<x<b_u).
\end{equation}
For an $L^1$ profile let
$\bar A_1(x)=\lim_{y\downarrow x}A_1^-(y)$ at ordinary cuts and
$\bar A_1(a)=\lim_{x\downarrow a}A_1^-(x)$. On the ordered interval
$\wideI=[a,b_u)$, with an adjoined left point if necessary, the measure
$d(\bar A_1^r)$, $r>0$, means
\begin{equation}\label{eq:stieltjes}
 \mu(\{a\})=\bar A_1(a)^r,\quad
 \mu((s,t])=\bar A_1(t)^r-\bar A_1(s)^r,
 \quad\mu([a,x])=\bar A_1(x)^r.
\end{equation}
It is locally finite and sigma-finite under local admissibility. For
$1<p<\infty$, $A_p$ is continuous, $A_p(a+)=0$, and $d(A_p^R)$ has no
atoms for every $R>0$.
\end{proposition}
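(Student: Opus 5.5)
The proof splits into three independent parts. First, the evaluation norms. Second, the point test \eqref{eq:pointtest}. Third, the measure-theoretic description of $d(\bar A_1^r)$ and $d(A_p^R)$.

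\emph{Evaluation norms.} Since $Hf(x)=\int_a^x f=\int_I f\,\ind_{(a,x)}$, the norm of $f\mapsto Hf(x)$ on $L^p(v)$ is the norm of $\ind_{(a,x)}v^{-1}$ in the Köthe dual of $L^p(v)$ with respect to $v\,dt$.
\begin{itemize}
\item For $p=1$ this dual norm is $\esssup_{a<t<x}v^{-1}$. The lower bound uses $f=\ind_E/|E|_v$ with $E\subset(a,x)$ a set where $v^{-1}$ exceeds the supremum minus $\eps$; such $E$ has positive measure because $0<v<\infty$ a.e.
\item For $1<p<\infty$, Hölder's inequality gives the upper bound. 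Equality comes from the extremal $f=v^{1-p'}\ind_{(a,x)}$, truncated when the integral is infinite.
\item For $p=\infty$ with the unweighted supremum, the norm is $|(a,x)|=x-a$, attained at $f=1$.
\end{itemize}

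\emph{Point test.} Assume $B\ne0$ is bounded. Fix a cut $a<x<b_u$. For any $f,g$ supported in $(a,x)$, both $Hf$ and $Hg$ are constant on $(x,b_u)$, so
\[
U(x)^{1/q}|Hf(x)Hg(x)|\le\norm{B(f,g)}_{L^q(u)}\le\norm B\norm f\norm g.
\]
Here $U(x)>0$ by the definition of $b_u$. Taking suprema over $f,g$ gives the product inequality, provided finiteness is known.

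Finiteness is the delicate step; I expect it to be the main obstacle. If some profile were infinite at $x$, testing would give $\norm B=\infty$ unless the other factor vanished, and profiles are positive at interior cuts since $v_i<\infty$ a.e. So admissibility follows once $U(x)<\infty$. To get $U(x)<\infty$, I would fix $f,g$ with $Hf(x_0)Hg(x_0)\ne0$ for some $x_0\le x$ and note that $\int_x^{b_u}u$ is dominated by the output norm. Such a pair exists: take $f=g$ a positive bounded function with $\int_a^{x_0}fv<\infty$, which is possible by local integrability of the weights after truncating. Some care is needed when the profiles are finite only after testing, so I would argue by contradiction using truncated extremals as above.

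\emph{Stieltjes measures.} The function $A_1^-$ is nondecreasing, so $\bar A_1$ is its right-continuous regularization and is nondecreasing and right-continuous. Put $\bar A_1(a)=\lim_{x\downarrow a}A_1^-(x)$, which may be positive. The set function in \eqref{eq:stieltjes} is then the usual Lebesgue–Stieltjes measure of the right-continuous nondecreasing function $\bar A_1^r$, extended to the left point $a$ by an atom of mass $\bar A_1(a)^r$. Carathéodory extension from the semiring of half-open intervals together with $\{a\}$ gives countable additivity. Consistency $\mu([a,x])=\bar A_1(x)^r$ follows by telescoping.

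Local admissibility gives $\bar A_1(x)\le A_1^-(y)<\infty$ for $x<y<b_u$. Hence $\mu([a,x])<\infty$ for every $x<b_u$, and exhausting $\wideI$ by $[a,x_n]$ with $x_n\uparrow b_u$ yields local finiteness and $\sigma$-finiteness.

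For $1<p<\infty$, $A_p^{p'}$ is the integral of a locally integrable function, finite by admissibility. It is therefore absolutely continuous, and it tends to $0$ as $x\downarrow a$ by dominated convergence. Consequently $A_p^R=(A_p^{p'})^{R/p'}$ is continuous, nondecreasing and vanishes at $a+$. Its Stieltjes measure therefore has no atoms at interior points and no initial atom.
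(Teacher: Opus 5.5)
Your proposal is correct and follows essentially the same route as the paper. Both use H\"older/duality with concentrating or truncated extremals for the evaluation norms, and both obtain the point test by freezing the primitives of inputs supported before $x$. Both also get finiteness of $d(\bar A_1^r)$ by comparison with a strictly later finite profile, and continuity and $A_p(a+)=0$ from integrability of $v^{1-p'}$ on initial intervals.

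The step you flag as delicate is immediate and needs no local integrability of the weights. Test with $f=g=\chi_E$, where $E\subset(a,x)$ is a bounded set of positive measure on which $v_1,v_2\le M$. This gives $U(x)^{1/q}|E|^2\le\norm B\,\norm{\chi_E}_{X_1}\norm{\chi_E}_{X_2}<\infty$. Ruling out an infinite profile needs only $U(x)>0$ and positivity of the other profile, so the two finiteness claims do not depend on each other.
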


\begin{proof}
Weighted H\"older gives the finite-$p>1$ upper bound and its dual
extremizing sequence gives equality. For $p=1$, write the primitive as
$\int f v\,v^{-1}$. Concentration on positive-measure subsets where
$v^{-1}$ approaches its essential supremum gives equality. For
$p=\infty$, the function $1$ attains the value $x-a$.
Choose nonnegative unit inputs supported before $x$ whose primitives
approach the two evaluation norms. Their primitives remain constant
beyond $x$, and integration of their product over $(x,b_u)$ proves
\eqref{eq:pointtest}. Truncated inputs also prove that an infinite
profile is incompatible with nontrivial boundedness. The positivity of
the other profile and the active-tail property then force finiteness of
$U(x)$.

A monotone profile has at most countably many jumps. Its right-continuous
envelope agrees with it almost everywhere for Lebesgue measure and
defines \eqref{eq:stieltjes}. Finiteness follows by comparison with a
strictly later finite profile. Exhausting $\wideI$ by initial intervals
proves sigma-finiteness. Finally, $v^{1-p'}$ is integrable on every
initial interval with endpoint below $b_u$. Continuity of its integral
proves both continuity and the zero initial limit of $A_p$. The same
statements imply continuity of $A_p^R$.
\end{proof}

The representative convention is part of the measure definition. For
$v=1$, $A_1^-=1$ on $I_u$, and \eqref{eq:stieltjes} gives
$\mu=\delta_a$, not the zero measure. At an interior jump, the physical
left-open profile and its cumulative representative need not have the
same value. Their agreement in an outer density integral does not
permit their interchange inside a Stieltjes integral.
If $a=-\infty$, the singleton $\{-\infty\}$ belongs to the adjoined
ordered space, not to $\mathbb R$. Before invoking a Borel-measure
result on $\mathbb R$, push both measures forward by the increasing map
$x\mapsto e^x$, with $-\infty\mapsto0$. Composition with its inverse is
an isometry of the corresponding $L^s$ spaces and preserves all lower
and upper sets, including their brackets and masses.

\subsection{Quantities inherited from localization}
A bounded bilinear map is compact when the image of the product of the
closed unit balls has compact closure. Its rank is the dimension of its
output span. For such a map $T$ into $Y=L^q(u)$ define
\[
 \begin{gathered}
 \norm T_\e=\inf_{K\ \mathrm{compact}}\norm{T-K},\qquad
 \distfr(T)=\inf_{F\ \mathrm{finite\ rank}}\norm{T-F},\\
 a_n^{(2)}(T)=\inf_{\rank F<n}\norm{T-F}.
 \end{gathered}
\]
All approximating maps in these definitions are bounded and bilinear.
Let $\chi_q(T)$ be the infimum of radii of finite covers of the unit-ball
image by open $Y$ quasi-norm balls with centers in $Y$. For $q<1$ the
complete metric is $\rho(y,z)=\norm{y-z}_q^q$. Its radius quantity is
written $\chi_\rho(T)$.

For $a<c<d<b_u$ use the multiplication operators
\begin{equation}\label{eq:cutoffs}
 \begin{gathered}M_{c,d}h=\ind_{(c,d)}h,\qquad
 R_{c,d}h=\ind_{(a,c)\cup(d,b_u)}h,\\
 \tactive(T)=\inf_{a<c<d<b_u}\norm{R_{c,d}T}.\end{gathered}
\end{equation}
The target identifies $L^q(u;I_u)$ isometrically with $L^q(u;I)$.
Thus the abstract results of \cite{B1} apply on $I_u$ without changing
that work's setting. In its notation, $T^{c,d}$ is our $M_{c,d}T$ and
$Q_{c,d}T$ is our $R_{c,d}T$.

\begin{theorem}[Inherited localization, {\cite[Theorems 5.1 and 5.3]{B1}}]
\label{thm:inherited}
Let $T:E_1\times E_2\to L^q(u;I_u)$ be bounded, with normed input
spaces and $0<q<\infty$. If every $M_{c,d}T$ is compact, then
\[
 T\text{ compact}\ \Longleftrightarrow\ \tactive(T)=0,
 \qquad \norm T_\e=\chi_q(T)=\tactive(T).
\]
Vanishing of the coupled tail is equivalent to vanishing of both
one-sided operator-norm tails. If each $M_{c,d}T$ is a norm limit of
finite-rank bilinear maps, then also
\[
 \distfr(T)=\lim_{n\to\infty}a_n^{(2)}(T)=\tactive(T).
\]
For $q<1$, $\chi_\rho(T)=\tactive(T)^q$.
\end{theorem}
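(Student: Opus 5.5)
The theorem is inherited from \cite{B1}, so the plan is to check that the setting here matches the hypotheses of \cite[Theorems 5.1 and 5.3]{B1} and then reconstruct the three pieces of that argument in order. First, the isometric identification of $L^q(u;I_u)$ with $L^q(u;I)$ and the dictionary $T^{c,d}=M_{c,d}T$, $Q_{c,d}T=R_{c,d}T$ put us exactly in that setting. Since $\norm{R_{c,d}T}$ decreases as $c\downarrow a$ and $d\uparrow b_u$ (monotonicity of the lattice quasi-norm under smaller indicator multipliers), $\tactive(T)$ is a monotone limit. The decomposition $T=M_{c,d}T+R_{c,d}T$ holds pointwise.

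\emph{Upper bounds.} If each $M_{c,d}T$ is compact, then $\norm T_\e\le\norm{T-M_{c,d}T}=\norm{R_{c,d}T}$. Taking the infimum gives $\norm T_\e\le\tactive(T)$. For $\chi_q$, cover the image of $M_{c,d}T$ by finitely many small balls. Because $\norm{y+z}_q^{\min(q,1)}\le\norm y_q^{\min(q,1)}+\norm z_q^{\min(q,1)}$ and $M_{c,d}T$, $R_{c,d}T$ have disjointly supported outputs, we get
\[\norm{M_{c,d}T(f,g)-y+R_{c,d}T(f,g)}_q^q=\norm{M_{c,d}T(f,g)-y}_q^q+\norm{R_{c,d}T(f,g)}_q^q\]
for centers $y$ supported in $(c,d)$. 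Using this exact disjoint additivity rather than the quasi-triangle inequality gives $\chi_q(T)\le\tactive(T)$ with no constant loss, and $\chi_\rho\le\tactive^q$ for $q<1$. The same estimate with finite-rank approximants of $M_{c,d}T$ gives $\distfr(T)\le\lim_n a_n^{(2)}(T)\le\tactive(T)$.

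\emph{Lower bounds.} This is the main obstacle. We need $\tactive(T)\le\chi_q(T)$ and $\tactive(T)\le\norm{T-K}$ for every compact bilinear $K$. The plan is to use the absolute continuity of the $L^q(u)$ quasi-norm: for any compact set $\mathcal K\subset Y$, $\sup_{k\in\mathcal K}\norm{R_{c,d}k}_q\to0$ as $c\downarrow a$, $d\uparrow b_u$. This follows from total boundedness plus dominated convergence on finitely many centers.

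\begin{itemize}
\item Given a cover of the unit-ball image by balls of radius $r$ with centers $y_j$, we have $R_{c,d}T(f,g)=R_{c,d}(T(f,g)-y_j)+R_{c,d}y_j$. The first term has quasi-norm below $r$, and the second vanishes uniformly on shrinking tails.
\item To avoid a constant, use $q$-additivity on the tail region, where the uniform smallness of $R_{c,d}y_j$ enters additively in the $\min(q,1)$ power. This gives $\tactive\le r$ in the limit.
\item For compact $K$, apply the same argument to the image of $K$ on the unit ball, which gives $\norm T_\e\ge\tactive$.
\item Since finite-rank bounded maps are compact, $\distfr\ge\norm T_\e$, which closes the chain $\tactive\le\norm T_\e\le\distfr\le\lim a_n^{(2)}\le\tactive$. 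The limit of $a_n^{(2)}$ exists by monotonicity in $n$.
\end{itemize}

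\emph{Compactness and one-sided tails.} $T$ is compact iff $\norm T_\e=0$, which by the above is iff $\tactive=0$. For the equivalence with the two one-sided tails $\inf_c\norm{\ind_{(a,c)}T}$ and $\inf_d\norm{\ind_{(d,b_u)}T}$, use disjointness again:
\[\norm{R_{c,d}T(f,g)}_q^q=\norm{\ind_{(a,c)}T(f,g)}_q^q+\norm{\ind_{(d,b_u)}T(f,g)}_q^q.\]
Hence each one-sided norm is at most the coupled one, and the coupled one is at most $\bigl(\norm{\ind_{(a,c)}T}^q+\norm{\ind_{(d,b_u)}T}^q\bigr)^{1/q}$.

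The delicate point I expect is that normed inputs are needed only so that the unit ball image and the bilinear approximations behave. The target may be quasi-Banach, so throughout I will avoid the quasi-triangle constant by working exclusively with disjointly supported pieces.
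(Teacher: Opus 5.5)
The paper gives no proof of this statement. It identifies $L^q(u;I_u)$ isometrically with $L^q(u;I)$, translates $T^{c,d}=M_{c,d}T$ and $Q_{c,d}T=R_{c,d}T$, and cites \cite[Theorems 5.1 and 5.3]{B1}. You make the same identification and then reprove the abstract result directly. Your reconstruction is correct:
\begin{itemize}
\item the upper bounds come from $T=M_{c,d}T+R_{c,d}T$ and exact disjoint $q$-additivity;
\item the lower bounds come from the $\min(1,q)$-triangle inequality and uniform vanishing of tails on totally bounded sets (the finite set of centres, or the image of the unit ball under a compact $K$);
\item the chain $\tactive\le\norm T_\e\le\distfr\le\lim_n a_n^{(2)}\le\tactive$ closes because bounded finite-rank bilinear maps are compact.
\end{itemize}
Compared with the citation, your route makes the paper self-contained. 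It also shows why the identities are exact rather than holding only up to the quasi-triangle constant. The citation buys only brevity.

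Three small points should be made explicit.
\begin{itemize}
\item Disjoint additivity needs the centres and the finite-rank approximants to be supported in $(c,d)$. Arrange this by replacing $y$ by $\chi_{(c,d)}y$ and $F$ by $M_{c,d}F$. Neither replacement increases the distance to $M_{c,d}T$ or the rank.
\item The implication $\tactive(T)=0\Rightarrow T$ compact goes through $\chi_q(T)=0$, i.e.\ total boundedness of the image. It then uses completeness of $L^q(u)$. The same completeness is behind your step ``$T$ compact iff $\norm T_\e=0$.''
\item For $q<1$, the identity $\chi_\rho=\chi_q^{\,q}$ holds for every map, because the $\rho$-ball of radius $s$ is the quasi-norm ball of radius $s^{1/q}$. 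This gives the lower bound $\chi_\rho\ge\tactive^q$, which you did not state separately.
\end{itemize}
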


This is an attributed statement of the abstract infrastructure. Its
proof is not repeated. The next lemma supplies its hypothesis at finite
input endpoints independently of the Stieltjes boundedness criteria.

\subsection{Finite input exponents}
\begin{lemma}[Endpoint local finite-rank realization]\label{lem:sampler}
Suppose $1\le p_1,p_2<\infty$, with at least one input exponent $1$,
and $B:X_1\times X_2\to L^q(u)$ is bounded. Each strict-middle map
$M_{c,d}B$ is an operator-norm limit of finite-rank bilinear maps.
For a partition $\mathcal P:c=x_0<\cdots<x_N=d$, put
\begin{equation}\label{eq:sampler}
 S_{\mathcal P}(f,g)=\sum_{j=1}^N
 Hf(x_{j-1})Hg(x_{j-1})\ind_{J_j},\qquad J_j=(x_{j-1},x_j).
\end{equation}
The rank is at most $N$. On the fixed window, set
\begin{equation}\label{eq:gauge}
 d\nu_0(x)=\frac{u(x)}{U(x)}\,dx,\qquad
 V=\nu_0((c,d))=\log\frac{U(c)}{U(d)}<\infty.
\end{equation}
Partitions can be chosen so that, for $0<q<1$,
\begin{equation}\label{eq:localrate}
 a_{N+1}^{(2)}(M_{c,d}B)^q
 \le C_q\norm B^q V\bigl(N^{-q/p_1}+N^{-q/p_2}\bigr).
\end{equation}
\end{lemma}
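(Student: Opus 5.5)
Fix $a<c<d<b_u$ and write the error of the sampler on $J_j$ through the two-term telescoping identity
\[
 Hf(x)Hg(x)-Hf(x_{j-1})Hg(x_{j-1})
 =\Bigl(\int_{x_{j-1}}^xf\Bigr)Hg(x)+Hf(x_{j-1})\Bigl(\int_{x_{j-1}}^xg\Bigr),
 \qquad x\in J_j .
\]
The plan is to bound each term by one increment factor and one evaluation factor. The evaluation factors are controlled by the point test \eqref{eq:pointtest}: $|Hg(x)|\le A_{2,p_2}(x)\norm g$ and $|Hf(x_{j-1})|\le A_{1,p_1}(x_{j-1})\norm f\le A_{1,p_1}(x)\norm f$. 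Hence $U(x)^{1/q}A_{1,p_1}(x)A_{2,p_2}(x)\le\norm B$ on the window. The increment factors are controlled by weighted H\"older: $\bigl|\int_{x_{j-1}}^xf\bigr|\le A_{1,p_1}(x)\,m_j(f)^{1/p_1}$, where $m_j(f)=\int_{J_j}|f|^{p_1}v_1/\norm f^{p_1}$ is the normalized input mass on $J_j$. For $p_1=1$, the profile $A_1^-$ dominates $v_1^{-1}$ on $J_j$, so the same holds with exponent $1$. Thus for unit inputs
\[
 |B(f,g)-S_{\mathcal P}(f,g)|(x)^q\le C_q\norm B^q\,U(x)^{-1}\bigl(m_j(f)^{q/p_1}+m_j(g)^{q/p_2}\bigr),\qquad x\in J_j,
\]
using $(s+t)^q\le C_q(s^q+t^q)$. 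Here $m_j(g)$ is the analogous normalized mass for the second input, defined with $p_2$ and $v_2$. Integrating against $u$ converts $U^{-1}u\,dx$ into $\nu_0$, and gives
\[
 \norm{M_{c,d}(B-S_{\mathcal P})(f,g)}_q^q\le C_q\norm B^q\sum_j\nu_0(J_j)\bigl(m_j(f)^{q/p_1}+m_j(g)^{q/p_2}\bigr).
\]
Note that $M_{c,d}S_{\mathcal P}=S_{\mathcal P}$.

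Next choose $\mathcal P$ to split $\nu_0$ equally: $\nu_0(J_j)=V/N$. This is possible because $\nu_0$ is atomless and finite on the window, and $V=\log(U(c)/U(d))$ is finite since $0<U<\infty$ by \eqref{eq:pointtest}. This is the output-mass allocation step, and it is the main point. Uniformly small dual-weight increments are unavailable at $p=1$, so all the smallness must come from distributing output mass, while the input masses are handled by concavity. Since $\sum_j m_j(f)\le1$ and $q/p_1\le q<1$, Jensen's inequality (concavity of $t\mapsto t^{q/p_1}$) gives
\[
 \sum_j\tfrac VN\,m_j(f)^{q/p_1}\le V\Bigl(\tfrac1N\sum_j m_j(f)\Bigr)^{q/p_1}\le V N^{-q/p_1},
\]
and likewise for $g$. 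This bound is uniform over the unit balls, which yields \eqref{eq:localrate} because $S_{\mathcal P}$ is bilinear of rank at most $N$. Letting $N\to\infty$ gives the operator-norm finite-rank approximation of $M_{c,d}B$.

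For $q\ge1$ the same scheme works with the triangle inequality in place of the $q$-power subadditivity. The case $q/p_i\ge1$ needs only $\sum_j m_j^{q/p_i}\le\max_j m_j^{q/p_i-1}$, and smallness can no longer come from Jensen. There I would refine the partition further at the finitely many points where the input masses could concentrate, or simply use $\max_j\nu_0(J_j)\to0$ together with $m_j^{q/p_i}\le m_j$ to obtain convergence to zero. That gives the qualitative density claim, which is all the lemma asserts beyond the $q<1$ rate. The delicate technical checks are two. First, the profile monotonicity inequality $A_{1,p_1}(x_{j-1})\le A_{1,p_1}(x)$ must be applied with the left-open $A_1^-$ rather than its right-continuous envelope. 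Second, the window endpoints $c,d$ must lie in $I_u$, so that $U(d)>0$.
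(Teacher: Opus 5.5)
Your proposal is correct and follows the paper's proof essentially step by step: the same telescoping split, the point test \eqref{eq:pointtest}, integration against $\nu_0=u/U\,dx$, and an equal-$\nu_0$-mass partition. Your Jensen step is just the equal-weight case of the paper's H\"older allocation bound \eqref{eq:allocation}.

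In the $q\ge1$ paragraph, the option of refining at points where the input masses concentrate does not work. The partition has to be chosen independently of $(f,g)$ for an operator-norm estimate. Your other option, $\sum_j\nu_0(J_j)m_j^{q/p_i}\le\max_j\nu_0(J_j)$ when $q/p_i\ge1$, is exactly the paper's $\gamma\ge1$ case and suffices. The Jensen bound covers the remaining case $q\ge1>q/p_i$.
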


\begin{proof}
The coefficients in \eqref{eq:sampler} are products of bounded linear
functionals, and its outputs belong to the span of the $N$ indicators.
These indicators are in $L^q(u)$ by $U(c)<\infty$. Formula
\eqref{eq:sampler} is the familiar primitive sampler also used in the
interior construction of \cite{B1}. We prove its endpoint error estimate.

For unit inputs $f_1=f,f_2=g$, let
$m_{ij}=\int_{J_j}|f_i|^{p_i}v_i$, so $\sum_jm_{ij}\le1$.
For $x\in J_j$, the endpoint evaluation inequality gives
\[
 \left|\int_{x_{j-1}}^x f_i\right|
 \le A_{i,p_i}(x)m_{ij}^{1/p_i},\qquad
 |Hf_i(x)|,\ |Hf_i(x_{j-1})|\le A_{i,p_i}(x).
\]
The product difference can be written as
\[
 [Hf(x)-Hf(x_{j-1})]Hg(x)
 +Hf(x_{j-1})[Hg(x)-Hg(x_{j-1})].
\]
Using $(s+t)^q\le C_q(s^q+t^q)$ for nonnegative $s,t$, with
$C_q=\max(1,2^{q-1})$, and then \eqref{eq:pointtest}, we obtain
\begin{equation}\label{eq:pointwiseerror}
 |B(f,g)(x)-S_{\mathcal P}(f,g)(x)|^q
 \le C_q\norm B^q U(x)^{-1}
       \sum_{i=1}^2m_{ij}^{q/p_i}.
\end{equation}
Integration and then maximization over the input masses give the
precise operator-norm interpretation of the allocation estimate
\begin{equation}\label{eq:allocationerror}
 \norm{M_{c,d}B-S_{\mathcal P}}^q
 \le C_q\norm B^q\sum_{i=1}^2
 \sup_{\substack{m_j\ge0\\\sum_jm_j\le1}}
 \sum_j\nu_0(J_j)m_j^{q/p_i}.
\end{equation}
For $0<\gamma<1$ and $a_j\ge0$, H\"older in the finite sum gives
\begin{equation}\label{eq:allocation}
 \sup_{\sum m_j\le1}\sum_ja_jm_j^\gamma
 =\left(\sum_ja_j^{1/(1-\gamma)}\right)^{1-\gamma}
 \le (\max_ja_j)^\gamma\left(\sum_ja_j\right)^{1-\gamma}.
\end{equation}
Equality in the first formula follows by taking $m_j$ proportional to
$a_j^{1/(1-\gamma)}$. For $\gamma\ge1$ the same supremum is at most
$\max_ja_j$. The finite measure $\nu_0$ is atomless, so its cumulative
function allows partitions with maximal cell mass tending to zero.
Equations \eqref{eq:allocationerror}--\eqref{eq:allocation} prove
operator-norm convergence for every $q>0$. No target Minkowski
inequality has been used when $q<1$.

If $V=0$ the middle map is zero. Otherwise choose equal-$\nu_0$-mass
cells, allowing zero-density gaps in the cuts. Then
$\nu_0(J_j)=V/N$. For $q<1$ both $q/p_i<1$, so
\eqref{eq:allocation} equals $VN^{-q/p_i}$. The rank bound gives
\eqref{eq:localrate}.
\end{proof}

In particular, for $(p_1,p_2)=(1,p)$ the right side of
\eqref{eq:localrate} contains $N^{-q}+N^{-q/p}$. For $(1,1)$ its
$q$th root is $O(N^{-1})$ on a fixed window. These are local upper
estimates. The data and the window enter through $\norm B$ and $V$.
There is no global power law asserted here. The argument uses the
atomlessness of $u/U\,dx$, not atomlessness of the Stieltjes input
measures. This distinction permits jumps and singular continuous parts
in the endpoint profiles.

\section{Mixed \texorpdfstring{$L^1\times L^p$}{L1 times Lp} inputs}\label{sec:mixed}

Fix $1<p<\infty$. Write $A_1^-$ for the first input profile and $A_p$
for the second. The setting and local admissibility are those of
Section~\ref{sec:framework}. For an output density $0\le w\le u$ put
$W(x)=\int_x^{b_u}w$ and denote the corresponding map by $B_w$.
The fixed endpoint $b_u$ is retained even when $w$ is a disconnected
output truncation. This convention avoids a change of measures when
taking tails.

For $0<q<1$ set
\begin{equation}\label{eq:mixedparams}
 \begin{gathered}r=\frac q{1-q},\quad R=\frac{pq}{p-q},\quad
 \theta=\frac1q,\quad\theta'=\frac1{1-q},\quad t=\frac p{p-q},
 \\ d\mu=d(\bar A_1^r),\qquad d\nu=d(A_p^R).\end{gathered}
\end{equation}
All cumulative measures follow Proposition~\ref{prop:profiles}. Define
\begin{equation}\label{eq:mixedS}
 \calS_Wh(x)=W(x)\int_{[a,x]}h\dd\mu
                +\int_{(x,b_u)}W(s)h(s)\dd\mu(s).
\end{equation}
We denote the first and second terms by $\calP_Wh$ and $\calQ_Wh$.
The following explicit characteristics specify the result without a
supremum over a remaining original input.

If $\theta\le t$, let
\begin{align}
 \mathfrak P_m(w)&=\sup_{x\in\wideI}\bar A_1(x)^q
       \left(\int_{[x,b_u)}W(y)^t\dd\nu(y)\right)^{1/t},
       \label{eq:mixedPs}\\
 \mathfrak Q_m(w)&=\sup_{x\in\wideI}\nu([a,x))^{1/t}
       \left(\int_{[x,b_u)}W(y)^{\theta'}\dd\mu(y)\right)^{1/\theta'}.
       \label{eq:mixedQs}
\end{align}
If $t<\theta$, put $\eta=(1/t-1/\theta)^{-1}=p/[p(1-q)-q]$ and let
\begin{align}
 \mathfrak P_m(w)^\eta
 &=\int_{\wideI}
 \left(\int_{[x,b_u)}W(y)^t\dd\nu(y)\right)^{\eta/\theta}
 \mu([a,x])^{\eta/\theta'}W(x)^t\dd\nu(x),\label{eq:mixedPi}\\
 \mathfrak Q_m(w)^\eta
 &=\int_{\wideI}\nu([a,x])^{\eta/\theta}
 \left(\int_{(x,b_u)}W(y)^{\theta'}\dd\mu(y)\right)^{\eta/\theta'}
 \dd\nu(x).\label{eq:mixedQi}
\end{align}
Set $\mathcal C_m(w)=(\mathfrak P_m(w)+\mathfrak Q_m(w))^{1/q}$.
For $q\ge1$, instead define
\begin{equation}\label{eq:mixedBanachC}
 \mathcal C_m(w)=\esssup_{s\in I}
 \frac1{v_1(s)}
 \norm{H:L^p(v_2)\longrightarrow L^q(w\ind_{(s,b_u)})}.
\end{equation}
An empty target interval contributes zero. Nonnegative products in these
characteristics use $0\cdot\infty=0$, as justified by monotone
truncation on the contributing sets.

\begin{theorem}[Main Theorem A]\label{thm:mixed}
For $1<p<\infty$ and $0<q<\infty$, $B:L^1(v_1)\times L^p(v_2)
\to L^q(u)$ is bounded if and only if $\mathcal C_m(u)<\infty$.
For $0<q<1$, uniformly for $0\le w\le u$,
\begin{equation}\label{eq:mixednorm}
 \norm{B_w}^q\asymp_{p,q}
 \norm{\calS_W:L^\theta(\mu)\to L^t(\nu)},\qquad
 \norm{B_w}\asymp_{p,q}\mathcal C_m(w).
\end{equation}
For $q\ge1$, $\norm{B_w}=\mathcal C_m(w)$ exactly.
In the bounded case, compactness is equivalent to
\begin{equation}\label{eq:mixedtails}
 \lim_{c\downarrow a}\mathcal C_m(u\ind_{(a,c)})=0,
 \qquad
 \lim_{d\uparrow b_u}\mathcal C_m(u\ind_{(d,b_u)})=0.
\end{equation}
In particular,
\begin{equation}\label{eq:mixedautomatic}
 0<q<\frac p{p+1}\quad\Longrightarrow\quad
 B\text{ bounded}\ \Longleftrightarrow\ B\text{ compact}.
\end{equation}
At and above $p/(p+1)$ compactness is nonautomatic. Interchanging the
inputs gives the same statements for $L^p(v_1)\times L^1(v_2)$.
\end{theorem}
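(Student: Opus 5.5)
We split the argument into three stages: the norm characterization, the compactness criterion, and the automatic range.

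\emph{Norm characterization.} For $q\ge1$ we exploit linearity in the $L^1$ input. For fixed $g$, the map $f\mapsto B_w(f,g)=Hf\cdot Hg$ is linear from $L^1(v_1)$ into a Banach space $L^q(w)$. Its norm is therefore the supremum over normalized point masses $f=v_1(s)^{-1}\delta_s$, approximated by concentrating $f$ near Lebesgue points $s$. Since $H\delta_s=\ind_{(s,b_u)}$, Minkowski's integral inequality gives the upper bound and concentration gives the lower bound. Taking the supremum over $\|g\|_{L^p(v_2)}\le1$ then yields exactly $\esssup_s v_1(s)^{-1}\norm{H:L^p(v_2)\to L^q(w\ind_{(s,b_u)})}$, which is \eqref{eq:mixedBanachC}.

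For $q<1$ Minkowski fails, so we perform two scalar reductions. First, fix $g\ge0$ and compute the $L^1$-part. By the layer-cake (Fubini) formula,
\[
\int |Hf|^q|Hg|^q w=\int_{\wideI}|Hf|^q\,d(\text{cumulative of }|Hg|^qW).
\]
We then optimize over $f$ with $\int|f|v_1\le1$. The key identity is that the supremum of $\int|Hf|^q\,d\sigma$ over the $L^1(v_1)$ ball is comparable to a $\theta=1/q$ dual expression against $d(\bar A_1^r)$. This follows from the endpoint Hardy theory of \cite{Santacruz2024} and Sinnamon's normal form \cite{Sinnamon2022}, and it is here that the initial atom $\mu(\{a\})$ and the strict/inclusive brackets in $\calP_W,\calQ_W$ appear. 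Second, the $g$-dependence is $|Hg|^q$ with $g\in L^p(v_2)$. Via the change of variables $A_p$ (continuous and atomless by Proposition~\ref{prop:profiles}), this becomes a Hardy-type functional in $L^{p/q}$, whose dual exponent is $t=p/(p-q)$ with measure $d(A_p^R)$. Combining the two reductions gives $\norm{B_w}^q\asymp\norm{\calS_W:L^\theta(\mu)\to L^t(\nu)}$.

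The operator $\calS_W=\calP_W+\calQ_W$ is a positive Hardy plus Copson operator between general-measure spaces. Sinnamon--Stepanov criteria \cite{SinnamonStepanov1996}, in the normal-form translation, give the supremal characteristics when $\theta\le t$ and the integral characteristics with $\eta$ when $t<\theta$. Hence $\norm{B_w}\asymp\mathcal C_m(w)$. All constants depend only on $p,q$ and are therefore uniform in $0\le w\le u$, because $w$ enters only through $W$.

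\emph{Compactness criterion.} Lemma~\ref{lem:sampler} shows that every $M_{c,d}B$ is a norm limit of finite-rank maps. Theorem~\ref{thm:inherited} then gives that $B$ is compact iff both one-sided tails $\norm{R_{a,c}B}$ and $\norm{R_{d,b_u}B}$ tend to $0$. The right tail is exactly $B_{u\ind_{(d,b_u)}}$. The left tail $B_{u\ind_{(a,c)}}$ also has the form $B_w$, with $W$ still computed up to $b_u$ by the stated convention. Uniformity of \eqref{eq:mixednorm} converts these tails into \eqref{eq:mixedtails}.

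\emph{Automatic range.} The hardest part, I expect, is the automatic range $q<p/(p+1)$. This range is equivalent to $\eta<\infty$ with $t<\theta$, i.e.\ $q(p+1)<p$, so the integral characteristics apply. Boundedness means $\mathfrak P_m(u)+\mathfrak Q_m(u)<\infty$. As $c\downarrow a$ or $d\uparrow b_u$, the truncated $W$ decreases pointwise to zero on the relevant sets: for the left truncation, $W(x)=\int_x^{c}u\to0$ for every $x\ge a$ eventually, except possibly at the initial atom $x=a$, where $W(a)\to0$ as $c\downarrow a$ anyway. Dominated convergence applied to \eqref{eq:mixedPi}--\eqref{eq:mixedQi}, with the untruncated integrals as majorants, then gives \eqref{eq:mixedtails}. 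The care needed is to check that the integrands are dominated, including the mass at $\{a\}$ with $\mu([a,x])\ge\bar A_1(a)^r$, and that the integrals are genuinely finite rather than only finite after the $0\cdot\infty$ convention.

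\emph{Nonautomatic range and symmetry.} That compactness fails to be automatic for $q\ge p/(p+1)$ is witnessed by critical power weights, where the supremal characteristic is scale invariant. Interchanging the inputs is immediate from the symmetry $B(f,g)=B(g,f)$.
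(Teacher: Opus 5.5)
Your proposal is correct in outline and follows the paper's own route stage by stage: the $L^1$ kernel-section norm for $q\ge1$; the two scalar reductions, linked by Banach duality in $L^{\theta'}(\mu)$, to reach $\calS_W$ (state that duality explicitly, together with the interchange of the suprema over $g$ and $h$); the general-measure Hardy/Copson characteristics; Lemma~\ref{lem:sampler} with Theorem~\ref{thm:inherited}; dominated convergence in the integral range; and critical power weights. One side claim is false but harmless: $W_c(a)=\int_a^cu$ need not tend to zero, since bounded maps on this face can have $U(a)=\infty$ (e.g.\ $v_1=v_2=1$, $p=2$, $q=1/2$, $u=x^{-1-\eps}$ on $(0,1)$ with $0<\eps<1/4$), but $W(a)$ never enters \eqref{eq:mixedPi}--\eqref{eq:mixedQi}, because $\nu$ has no atom at $a$ and the $\mu$-tail in $\mathfrak Q_m$ is the strict set $(x,b_u)$.
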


\subsection{Endpoint reduction}
We first record the scalar identity responsible for the initial mass.
The scalar $L^1$ Hardy criterion of Sinnamon--Stepanov
\cite[Theorems 3.2--3.3]{SinnamonStepanov1996}, with the ordered-core
formulation in \cite{Santacruz2024}, says that for a density $k\ge0$,
$F(x)=\int_x^{b_u}k$, and $0<q<1$,
\begin{equation}\label{eq:L1scalar}
 \norm{H:L^1(v)\to L^q(k)}
 \asymp_q\left(\int_{I_u}(A_1^-(y)F(y))^r k(y)\,dy\right)^{1/r}.
\end{equation}
The density statement extends to the present ordered interval by
increasing changes of variable and truncation. It is used with density
outputs here, not as a theorem about arbitrary atomic output measures.
The latter appear only after the following exact measure identity.
For $G=\bar A_1^r$ and $d\mu=dG$ including its initial mass, Tonelli gives
\begin{equation}\label{eq:tonelliidentity}
 \begin{aligned}
 \int_{I_u}G(y)F(y)^rk(y)\,dy
 &=\int_{\wideI}\int_{s}^{b_u}F(y)^rk(y)\,dy\dd\mu(s)\\
 &=\frac1{r+1}\int_{\wideI}F(s)^{r+1}\dd\mu(s).
 \end{aligned}
\end{equation}
For bounded tails the inner equality follows from $-F'=k$ and
$F(b_u-)=0$. Applying it first to bounded compactly truncated densities
and passing monotonically proves the extended-valued identity. Thus no
unproved boundary cancellation is required, and an initial atom is
counted exactly once.

Fix $g$ and take $k=|Hg|^qw$. With
$F_g(x)=\int_x^{b_u}|Hg(y)|^qw(y)\,dy$, equations
\eqref{eq:L1scalar}--\eqref{eq:tonelliidentity} and
$r+1=\theta'$ give
\begin{equation}\label{eq:firstmixed}
 \sup_{\norm f_{L^1(v_1)}\le1}\norm{Hf\,Hg}_{L^q(w)}^q
 \asymp_q\norm{F_g}_{L^{\theta'}(\mu)}.
\end{equation}
Since $\theta'>1$, this intermediate space has ordinary Banach duality.
For $h\ge0$ put $\Phi_h(y)=\int_{[a,y]}h\dd\mu$. Then
\begin{equation}\label{eq:dualstep}
 \norm{F_g}_{L^{\theta'}(\mu)}
 =\sup_{\substack{h\ge0\\\norm h_{L^\theta(\mu)}\le1}}
 \int_{I_u}|Hg(y)|^qw(y)\Phi_h(y)\,dy.
\end{equation}
This is duality in an auxiliary Banach space, not duality in $L^q(w)$.
The integration exchange is Tonelli for positive functions.

For each $h$, the usual scalar Hardy integral criterion for $q<p$
\cite{SinnamonStepanov1996,Sinnamon2022} yields
\begin{equation}\label{eq:secondmixed}
 \begin{gathered}\norm{H:L^p(v_2)\to L^q(w\Phi_h)}^q
 \asymp_{p,q}
 \left(\int_{\wideI}Z_h(x)^t\dd(A_p^R)(x)\right)^{1/t},
 \\ Z_h(x)=\int_x^{b_u}w(y)\Phi_h(y)\,dy.\end{gathered}
\end{equation}
The initial value of $A_p^R$ is zero. This Stieltjes version follows
from the scalar integral criterion by Tonelli. Local admissibility
ensures that the dual-weight primitive is finite at every strict cut.
Taking the two suprema in \eqref{eq:dualstep} in either order and using
\eqref{eq:secondmixed} gives the first equivalence in
\eqref{eq:mixednorm}. Indeed, Tonelli also gives $Z_h=\calS_Wh$ in
\eqref{eq:mixedS}. The atom at the cut belongs to the lower term.

\subsection{General-measure characteristics and their translation}
For clarity we state the background used to evaluate this last norm.
Let $1<P,S<\infty$, let $\sigma,\lambda$ be positive sigma-finite
ordered Borel measures, and put
\[
 \mathsf H_\sigma h(x)=\int_{[a,x]}h\dd\sigma,
 \qquad \mathsf Q_{K,\sigma}h(x)=\int_{(x,b_u)}Kh\dd\sigma.
\]
Under finite relevant one-sided masses, their norms from $L^P(\sigma)$
to $L^S(\lambda)$ are equivalent, respectively, to
\begin{align}
 \sup_{x\in\wideI}\sigma([a,x])^{1/P'}\lambda([x,b_u))^{1/S},
 &\quad P\le S,\label{eq:generalHs}\\
 \left(\int_{\wideI}\lambda([x,b_u))^{\zeta/P}
        \sigma([a,x])^{\zeta/P'}\dd\lambda(x)\right)^{1/\zeta},
 &\quad S<P,\label{eq:generalHi}
\end{align}
and
\begin{align}
 \sup_{x\in\wideI}\lambda([a,x))^{1/S}
 \left(\int_{[x,b_u)}K^{P'}\dd\sigma\right)^{1/P'},
 &\quad P\le S,\label{eq:generalQs}\\
 \left(\int_{\wideI}\lambda([a,x])^{\zeta/P}
 \left(\int_{(x,b_u)}K^{P'}\dd\sigma\right)^{\zeta/P'}\dd\lambda(x)
 \right)^{1/\zeta},&\quad S<P,\label{eq:generalQi}
\end{align}
where $1/\zeta=1/S-1/P$. These are consequences of
\cite[Theorems 2.4, 2.6, 2.7, 3.8 and 3.9]{Sinnamon2022}, with
constants depending only on $P,S$.

Here are the translation details needed at endpoints. After the ordered
pushforward of Section~\ref{sec:framework}, the lower sets are real
Borel sets of the inclusive type in Theorem 2.6 of that source. Its
normal-form reduction preserves the operator norm. An atom of the
output measure becomes an interval of its exact mass in the rearranged
variable. Integrating a nonnegative power over such an interval is
comparable to its inclusive cumulative endpoint value times the mass,
with constants depending only on the power. Applying the supremal and
integral normal-form criteria and summing these contributions yields
\eqref{eq:generalHs}--\eqref{eq:generalHi}. Singular continuous parts
are preserved by the same distribution identity.

For the upper operator absorb its kernel into
$d\tau=K^{P'}\dd\sigma$. With
$h=fK^{1-P'}$ on $\{K>0\}$ and $h=0$ on $\{K=0\}$, the literal identity is
\begin{equation}\label{eq:kernelzero}
 \norm h_{L^P(\tau)}
 =\norm{f\ind_{\{K>0\}}}_{L^P(\sigma)}
 \le\norm f_{L^P(\sigma)}.
\end{equation}
There is equality for supported inputs. The original integral ignores
$\{K=0\}$. The inverse substitution on that supported subspace
therefore preserves the operator norm. Truncation deals with infinite
kernel values before passage to the limit.
The strict upper-set version of Theorem 2.7 gives
\eqref{eq:generalQs}--\eqref{eq:generalQi}. In the supremal formula the
cut pairs outputs strictly below $x$ with inputs at or above $x$.
Equivalently one may use an inclusive output prefix and a strict input
tail. Taking one-sided limits over all real cuts gives the same
supremum. In the integral formula the output at $x$ is integrated
against $d\lambda(x)$, so its prefix is inclusive and its input tail
must be strict. No diagonal atom is counted. Restrictions to finite
measures, followed by monotone convergence, give the extended-valued
versions used here.

For $\calP_W$ substitute $(P,S,\sigma,\lambda)
=(\theta,t,\mu,W^t\nu)$. For $\calQ_W$ use
$(\theta,t,\mu,\nu)$ with $K=W$. Because
$\mu([a,x])^{1/\theta'}=\bar A_1(x)^q$, this gives
\eqref{eq:mixedPs}--\eqref{eq:mixedQi}. Positivity and $t>1$ imply
\[
 \max(\norm{\calP_W},\norm{\calQ_W})
 \le\norm{\calS_W}\le\norm{\calP_W}+\norm{\calQ_W},
\]
which proves the second equivalence in \eqref{eq:mixednorm}. This use of
Minkowski is in the auxiliary Banach target $L^t(\nu)$.

\subsection{Tails and the threshold}
The exponent division is
\begin{equation}\label{eq:mixedthreshold}
 \theta\le t\ \Longleftrightarrow\ \frac1q\le\frac p{p-q}
 \ \Longleftrightarrow\ p(1-q)\le q
 \ \Longleftrightarrow\ q\ge\frac p{p+1}.
\end{equation}
Suppose $q<p/(p+1)$ and $B$ is bounded. The full integrands in
\eqref{eq:mixedPi}--\eqref{eq:mixedQi} are then integrable. For a
lower-tail weight $w_c=u\ind_{(a,c)}$, its cumulative tail satisfies
$0\le W_c\le U$ and vanishes on $[c,b_u)$. At every ordinary fixed
$x>a$, both relevant inner tails are eventually zero. The outer
measure $\nu$ has no initial atom. Dominated convergence with the full
integrands therefore gives
$\mathfrak P_m(w_c),\mathfrak Q_m(w_c)\to0$.

For an upper-tail weight $w_d=u\ind_{(d,b_u)}$, its tail is
$W_d(x)=U(\max(x,d))$. Since $U(x_0)<\infty$ at one strict cut,
continuity from above gives $U(d)\downarrow0$, including when
$b_u=\infty$. Thus $W_d(x)\to0$ at each ordinary fixed $x$ and
$W_d\le U$. On points where their multiplying cumulative factors are
positive, the inner full integrals are finite almost everywhere by the
finite full characteristics. Dominated convergence applies first
inside and then outside. Points with a zero factor contribute zero
throughout. Both upper-tail characteristics consequently tend to zero.
The norm equivalence is uniform in these truncations, so the one-sided
operator tails vanish. Lemma~\ref{lem:sampler} and
Theorem~\ref{thm:inherited} prove \eqref{eq:mixedautomatic} and, without
the exponent restriction, \eqref{eq:mixedtails}. We do not invoke a
compactness-transfer theorem beyond its published exponent range.

For completeness consider $q\ge1$. For fixed $g$ the map in the first
input has kernel
$k_s(x)=v_1(s)^{-1}\ind_{(s,b_u)}(x)Hg(x)$ against $f(s)v_1(s)\,ds$.
Its $L^1\to L^q(w)$ norm is the essential supremum of the kernel-section
norms. The upper bound is Minkowski in the Banach target. The reverse
bound follows by approximating kernel sections by weighted normalized
inputs, or by the standard $L^1$ kernel norm theorem. Taking the
supremum over $g$ gives \eqref{eq:mixedBanachC}. The interchange with
the essential supremum is justified by a countable dense subset of
$L^p(v_2)$ and positivity, first on finite truncations. The same local
realization and inherited theorem give the tail criterion. This
completes the proof of Theorem~\ref{thm:mixed}, apart from sharpness,
which the following examples establish.

\begin{example}[Critical mixed powers]\label{ex:mixed}
On $(0,1)$ take $A,B_0>0$ and
\begin{equation}\label{eq:mixedpowers}
 v_1(x)=x^{-A},\qquad v_2(x)=x^{p-1-pB_0},\qquad
 u(x)=x^{-h-1},\qquad h=q(A+B_0).
\end{equation}
Here $A_1^-(x)=x^A$ and
$A_p(x)=(p'B_0)^{-1/p'}x^{B_0}$.
For $p/(p+1)\le q<1$, $W(x)\asymp x^{-h}$ near zero,
$d\mu=Arx^{Ar-1}\,dx$, and
$d\nu\asymp x^{B_0R-1}\,dx$.
The two supremal characteristics are finite. Their lower-end powers
are $x^{q(A+B_0)-h}=1$, while the upper endpoint is harmless.
For example, $ht-B_0R=qAt>0$ and
$h\theta'-Ar=qB_0\theta'>0$, so the inner integrals have precisely the
required lower-end powers. This verifies boundedness at the threshold
as well as above it in this range.

For the remaining range $q\ge1$ there is a direct boundedness check.
Choose a real $s>\max(p,q)$ and set
$1/t_0=1/q-1/s$. Then $t_0>1$,
$H:L^1(x^{-A})\to L^{t_0}(x^{-t_0A-1})$ and
$H:L^p(x^{p-1-pB_0})\to L^s(x^{-sB_0-1})$ are bounded by the scalar
supremal Hardy tests. H\"older with $q/t_0+q/s=1$ proves bilinear boundedness.

To see the nonvanishing tail directly, put
$f_\eps=c_A\eps^{A-1}\ind_{(\eps/2,\eps)}$ and
$g_\eps=c_{p,B_0}\eps^{B_0-1}\ind_{(\eps/2,\eps)}$, choosing the
positive constants to make the input norms one. Beyond $\eps$ their
primitives are positive constants times $\eps^A$ and
$\eps^{B_0}$. Hence
\[
 \int_\eps^{2\eps}|B(f_\eps,g_\eps)(x)|^qu(x)\,dx
 =c\eps^{q(A+B_0)-h}=c>0.
\]
Every lower tail contains such a window. By
Theorem~\ref{thm:inherited}, the bounded map is noncompact.
Below $p/(p+1)$ the same critical powers make the integral
characteristics diverge like $\int_0 dx/x$, and do not give a bounded
counterexample.

The concrete choice $p=2$, $q=3/4$, $A=B_0=1/2$ gives
$v_1=x^{-1/2}$, $v_2=1$, $u=x^{-7/4}$. It lies above the critical
value $2/3$ and is bounded but noncompact.
\end{example}

\section{Two \texorpdfstring{$L^1$}{L1} inputs}\label{sec:double}

Both inputs now have exponent $1$. Write
\[A_i^-(x)=\esssup_{a<s<x}v_i(s)^{-1},\qquad i=1,2,\]
and use their cumulative representatives $\bar A_i$. For $0<q<1$ put
\begin{equation}\label{eq:doubleparams}
 r=\frac q{1-q},\qquad\alpha=\frac1q,\qquad
 \beta=\frac1{1-q}=\alpha',\qquad d\mu_i=d(\bar A_i^r).
\end{equation}
Both measures may have initial or interior atoms. Given $0\le w\le u$,
let $W(x)=\int_x^{b_u}w$ and
\begin{equation}\label{eq:doubleS}
 \Phi_h(y)=\int_{[a,y]}h\dd\mu_1,\qquad
 \calS_Wh(x)=\int_x^{b_u}w(y)\Phi_h(y)\,dy.
\end{equation}
Tonelli gives the exact split
\begin{equation}\label{eq:doublesplit}
 \begin{aligned}\calP_Wh(x)&=W(x)\int_{[a,x]}h\dd\mu_1,\\
 \calQ_Wh(x)&=\int_{(x,b_u)}W(s)h(s)\dd\mu_1(s),
 \qquad\calS_W=\calP_W+\calQ_W.\end{aligned}
\end{equation}
In particular, a mass at $s=x$ occurs in $\calP_W$ and not in $\calQ_W$.

For $1/2\le q<1$ define
\begin{align}
 \mathfrak P_{\le}(w)
 &=\sup_{x\in[a,b_u)}\bar A_1(x)^q
 \left(\int_{[x,b_u)}W(y)^\beta\dd\mu_2(y)\right)^{1/\beta},
 \label{eq:doublePs}\\
 \mathfrak Q_{\le}(w)
 &=\sup_{x\in[a,b_u)}\mu_2([a,x))^{1/\beta}
 \left(\int_{[x,b_u)}W(y)^\beta\dd\mu_1(y)\right)^{1/\beta}.
 \label{eq:doubleQs}
\end{align}
The initial cut $x=a$ is included in \eqref{eq:doublePs}. For
$0<q<1/2$, put $\kappa=(1-2q)^{-1}$ and define
\begin{align}
 \mathfrak P_{>}(w)^\kappa
 &=\int_{[a,b_u)}
 \left(\int_{[x,b_u)}W(y)^\beta\dd\mu_2(y)\right)^{\kappa q}
 \mu_1([a,x])^{\kappa(1-q)}W(x)^\beta\dd\mu_2(x),
 \label{eq:doublePi}\\
 \mathfrak Q_{>}(w)^\kappa
 &=\int_{[a,b_u)}\mu_2([a,x])^{\kappa q}
 \left(\int_{(x,b_u)}W(y)^\beta\dd\mu_1(y)\right)^{\kappa(1-q)}
 \dd\mu_2(x).\label{eq:doubleQi}
\end{align}
Let
\begin{equation}\label{eq:doubleC}
 \mathcal C_d(w)=
 \begin{cases}
 (\mathfrak P_>(w)+\mathfrak Q_>(w))^{1/q},&0<q<1/2,\\
 (\mathfrak P_{\le}(w)+\mathfrak Q_{\le}(w))^{1/q},&1/2\le q<1,\\
 \displaystyle\sup_{a<x<b_u}W(x)^{1/q}A_1^-(x)A_2^-(x),&q\ge1.
 \end{cases}
\end{equation}
These are weight characteristics, with the same monotone-truncation
convention for zero factors as in Section~\ref{sec:mixed}.

\begin{theorem}[Main Theorem B]\label{thm:double}
Under the endpoint setting and local admissibility of
Section~\ref{sec:framework}, $B:L^1(v_1)\times L^1(v_2)\to L^q(u)$ is
bounded if and only if $\mathcal C_d(u)<\infty$. For $0<q<1$,
\begin{equation}\label{eq:doublenorm}
 \norm{B_w}^q\asymp_q
 \norm{\calS_W:L^\alpha(\mu_1)\to L^\beta(\mu_2)},\qquad
 \norm{B_w}\asymp_q\mathcal C_d(w),
\end{equation}
uniformly for $0\le w\le u$. For $q\ge1$,
$\norm{B_w}=\mathcal C_d(w)$ exactly. In the bounded case, compactness
is equivalent to
\begin{equation}\label{eq:doubletails}
 \mathcal C_d(u\ind_{(a,c)})\longrightarrow0\quad(c\downarrow a),
 \qquad
 \mathcal C_d(u\ind_{(d,b_u)})\longrightarrow0\quad(d\uparrow b_u).
\end{equation}
In particular,
\begin{equation}\label{eq:doubleautomatic}
 0<q<\frac12\quad\Longrightarrow\quad
 B\text{ bounded}\ \Longleftrightarrow\ B\text{ compact}.
\end{equation}
For every $q\ge1/2$ bounded noncompact examples exist. The fixed-window
finite-rank estimate for $0<q<1$ is
\[
 a_{N+1}^{(2)}(M_{c,d}B)\le C_q\norm B\,V^{1/q}N^{-1}.
\]
\end{theorem}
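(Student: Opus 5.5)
The proof follows the mixed case of Theorem~\ref{thm:mixed} almost line by line, with the $L^p$ scalar step replaced by a second application of the $L^1$ criterion.

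\emph{Step 1: first $L^1$ reduction.} Fix $g$ and take $k=|Hg|^qw$ in \eqref{eq:L1scalar}. Then the Tonelli identity \eqref{eq:tonelliidentity}, with $G=\bar A_1^r$ and its initial atom retained, gives
\[
\sup_{\norm f_{L^1(v_1)}\le1}\norm{Hf\,Hg}^q_{L^q(w)}\asymp_q\norm{F_g}_{L^\beta(\mu_1)}.
\]
Since $\beta>1$, Banach duality in $L^\beta(\mu_1)$ rewrites this as a supremum over $h\ge0$ with $\norm h_{L^\alpha(\mu_1)}\le1$ of $\int|Hg|^qw\Phi_h$, where $\Phi_h$ is as in \eqref{eq:doubleS}.

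\emph{Step 2: second $L^1$ reduction.} For each such $h$, apply \eqref{eq:L1scalar} and \eqref{eq:tonelliidentity} again, now with density $w\Phi_h$, profile $A_2^-$ and measure $\mu_2$. The $q$th power of $\norm{H:L^1(v_2)\to L^q(w\Phi_h)}$ is comparable to $\norm{Z_h}_{L^\beta(\mu_2)}$, where $Z_h=\calS_Wh$. Exchanging the two suprema gives the first equivalence in \eqref{eq:doublenorm}. The split \eqref{eq:doublesplit} assigns an atom at the cut to $\calP_W$. Combined with positivity and Minkowski in the Banach space $L^\beta(\mu_2)$, it gives $\norm{\calS_W}\asymp\norm{\calP_W}+\norm{\calQ_W}$.

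\emph{Step 3: evaluating the norms.} I evaluate the two norms with \eqref{eq:generalHs}--\eqref{eq:generalQi}, using $P=\alpha$ and $S=\beta$. For $\calP_W$ take $\lambda=W^\beta\mu_2$ and $\sigma=\mu_1$. For $\calQ_W$ take $K=W$, which absorbs the kernel as in \eqref{eq:kernelzero}. Here $\mu_1([a,x])^{1/\alpha'}=\bar A_1(x)^q$. The comparison $P\le S$ is $1/q\le1/(1-q)$, that is $q\ge1/2$. In the opposite case $1/\zeta=(1-q)-q=1-2q$, so $\zeta=\kappa$, which yields \eqref{eq:doublePs}--\eqref{eq:doubleQi}. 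The one point requiring care is that both measures carry initial atoms. The supremal formula must include the cut $x=a$, and the strict versus inclusive brackets must be transferred exactly as discussed after \eqref{eq:generalQi}.

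\emph{Step 4: the case $q\ge1$.} Minkowski holds in $L^q$, so $\norm{Hf\,Hg}_{L^q(w)}\le\iint|f(s)||g(\sigma)|\,\norm{\ind_{(\max(s,\sigma),b_u)}}_{L^q(w)}\,ds\,d\sigma$. This gives the upper bound $\sup_x W(x)^{1/q}A_1^-(x)A_2^-(x)$, with the measurable kernel handled as in the mixed $q\ge1$ argument. The reverse bound is \eqref{eq:pointtest} applied to $B_w$. Hence the norm equals $\mathcal C_d(w)$ exactly.

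\emph{Step 5: compactness.} Lemma~\ref{lem:sampler} with $p_1=p_2=1$ gives local finite-rank density. Its rate \eqref{eq:localrate} becomes $2C_q\norm B^qVN^{-q}$, and taking $q$th roots gives the stated fixed-window bound. Theorem~\ref{thm:inherited} together with the uniform equivalence \eqref{eq:doublenorm} then yields \eqref{eq:doubletails}.

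For $q<1/2$, boundedness makes the integrands in \eqref{eq:doublePi}--\eqref{eq:doubleQi} integrable, and I would use dominated convergence as in the mixed case. This step is the main obstacle: unlike $\nu$ there, the outer measure $\mu_2$ may now carry an initial atom at $a$. For lower tails $W_c$ vanishes on $[c,b_u)$, so at $x=a$ the inner integral $\int_{[a,b_u)}W_c^\beta\dd\mu_2$ still contains $W_c(a)^\beta\mu_2(\{a\})$, and $W_c(a)=\int_a^cu\to0$. I would show directly that the atom term in each characteristic is bounded by an integrable majorant whose value tends to zero because $W_c(a)\downarrow0$. For upper tails, $W_d\to0$ pointwise, including at $a$. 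The interior and atomic contributions are both dominated, first inside and then outside, as in the mixed argument.

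\emph{Step 6: examples.} On $(0,1)$ take $v_i=x^{-A_i}$ and $u=x^{-h-1}$ with $h=q(A_1+A_2)$. Test inputs normalized on $(\eps/2,\eps)$ give a window mass $c\,\eps^{q(A_1+A_2)-h}=c$ in every lower tail, so the map is noncompact. Boundedness follows for $1/2\le q<1$ by checking the supremal characteristics, whose lower-end powers balance exactly as in Example~\ref{ex:mixed}. For $q\ge1$ it follows from the explicit supremum $\sup_x x^{-h}x^{A_1+A_2q/q}$-type computation, which gives a constant.
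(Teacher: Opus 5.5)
Steps 1--5 follow the paper's route essentially step for step: the two $L^1$ reductions with the initial atom retained, positive duality in $L^\beta(\mu_1)$, the general-measure Hardy/Copson characteristics, Minkowski plus point testing for $q\ge1$, and Lemma~\ref{lem:sampler} with Theorem~\ref{thm:inherited}. Your power family in Step~6 (with $A_1,A_2>0$) is a valid generalization of Example~\ref{ex:critical}. There $A_1=A_2=1$, and you check boundedness through the characteristics instead of a Banach-target scalar bound plus H\"older. The genuine gap is in the step you flagged yourself, the initial output atom in the lower-tail argument for $q<1/2$. You assert $W_c(a)=\int_a^cu\to0$, but boundedness does not give $U(a)<\infty$. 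On $(0,1)$ take $v_1=x^{-A}$ with $A>0$, $v_2=1$, $u=x^{-\gamma-1}$ with $0<\gamma<Aq$, and $q<1/2$. Then $|B(f,g)(x)|\le x^A\norm f_{L^1(v_1)}\norm g_{L^1(v_2)}$ and $\int_0^1x^{Aq-\gamma-1}\,dx<\infty$, so $B$ is bounded. In this example $\mu_2=\delta_0$ has $m_2=1$, $\mu_1$ is atomless, and $\int_0^cu=\infty$ for every $c$. So making the atom terms vanish ``because $W_c(a)\downarrow0$'' is unavailable in exactly a configuration where the initial output atom is present.

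The missing idea is a case analysis on $m_i=\mu_i(\{a\})$. You need to use finiteness of the full characteristics at the atom to extract the integrability that drives the limit. If $m_1,m_2>0$, the $x=a$ contribution to $\mathfrak P_>(u)^\kappa$ is $\bigl(\int_{[a,b_u)}U^\beta\dd\mu_2\bigr)^{\kappa q}m_1^{\kappa(1-q)}U(a)^\beta m_2$. Its finiteness forces $U(a)<\infty$ and $\int_{[a,b_u)}U^\beta\dd\mu_2<\infty$, and only then does your $W_c(a)\to0$ argument apply. If $m_1=0$, this contribution is zero for every truncation under the $0\cdot\infty=0$ convention, whatever $U(a)$ is. The $x=a$ contribution to $\mathfrak Q_>(u\ind_{(a,c)})^\kappa$ is $m_2^{\kappa q+1}\bigl(\int_{(a,c)}W_c^\beta\dd\mu_1\bigr)^{\kappa(1-q)}$. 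Because the inner bracket is strict, neither $W_c(a)$ nor an atom of $\mu_1$ at $a$ appears, so your stated mechanism is the wrong one for this term. Instead, finiteness of $\mathfrak Q_>(u)$ at the atom gives $\int_{(a,b_u)}U^\beta\dd\mu_1<\infty$. Dominated convergence, with $W_c\to0$ pointwise on $(a,b_u)$, then kills the term. With this split, which is how the paper treats the initial output atom, the lower-tail limits and hence \eqref{eq:doubleautomatic} go through.
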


\subsection{Two endpoint reductions}
For fixed $g$, apply \eqref{eq:L1scalar}--\eqref{eq:tonelliidentity}
with the first weight. This gives
\[
 \sup_{\norm f_{L^1(v_1)}\le1}\norm{Hf\,Hg}_{L^q(w)}^q
 \asymp_q\norm{F_g}_{L^\beta(\mu_1)},\qquad
 F_g(x)=\int_x^{b_u}|Hg(y)|^qw(y)\,dy.
\]
The dual exponent of $\beta$ is $\alpha>1$. Hence positive Banach
duality in $L^\beta(\mu_1)$, followed by Tonelli, gives
\begin{equation}\label{eq:doubledual}
 \norm{F_g}_{L^\beta(\mu_1)}
 =\sup_{\substack{h\ge0\\\norm h_{L^\alpha(\mu_1)}\le1}}
 \int_{I_u}|Hg(y)|^qw(y)\Phi_h(y)\,dy.
\end{equation}
Now apply the scalar $L^1$ criterion to $g$ with input weight $v_2$
and output density $w\Phi_h$. Its cumulative output is precisely
$\calS_Wh$. Interchanging the independent suprema over $g$ and $h$
proves the first equivalence in \eqref{eq:doublenorm}. Both endpoint
steps have constants depending only on $q$. Each step uses a density
output before passing to its Stieltjes representation. No density
hypothesis is imposed on either $\mu_i$ afterward.

This derivation is valid for signed or complex inputs because the
operator is positive in modulus. Intermediate dual tests may be taken
nonnegative. Sigma-finiteness follows from the local profiles. Infinite
total masses are allowed. Finite restrictions and monotone convergence
justify the extended-valued identities. In particular, the reduction
does not pass through ordinary $L^1(I^2,v_1v_2)$. Its input supremum
remains the product of the two original unit balls.

To obtain the characteristics, apply
\eqref{eq:generalHs}--\eqref{eq:generalQi} with
\[
 (P,S,\sigma,\lambda)=(\alpha,\beta,\mu_1,W^\beta\mu_2)
 \quad\hbox{for }\calP_W,
\]
and with $(\alpha,\beta,\mu_1,\mu_2)$ and $K=W$ for $\calQ_W$.
The exponent division is
\begin{equation}\label{eq:doublethreshold}
 \alpha\le\beta\ \Longleftrightarrow\ \frac1q\le\frac1{1-q}
 \ \Longleftrightarrow\ q\ge\frac12.
\end{equation}
Also
$\mu_1([a,x])^{1/\alpha'}=\bar A_1(x)^{r/\beta}
=\bar A_1(x)^q$. This proves
\eqref{eq:doublePs}--\eqref{eq:doubleQs} in the supremal range.
In the integral range,
\[
 \frac1\kappa=\frac1\beta-\frac1\alpha=1-2q,
 \qquad\frac\kappa\alpha=\kappa q,
 \qquad\frac\kappa{\alpha'}=\kappa(1-q),
\]
which gives exactly \eqref{eq:doublePi}--\eqref{eq:doubleQi}.
Positivity bounds the norm of $\calS_W$ between the maximum and the
sum of the two component norms. This completes the boundedness
characterization for $q<1$.

\subsection{Why the atomic conventions affect the theorem}
If $\mu_1=m_1\delta_a$, then
$\calP_Wh(x)=m_1W(x)h(a)$ and $\calQ_W=0$. Its exact norm is
\begin{equation}\label{eq:pureinitial}
 \norm{\calP_W}=m_1^{1/\alpha'}
 \left(\int_{[a,b_u)}W(x)^\beta\dd\mu_2(x)\right)^{1/\beta}.
\end{equation}
When both $\mu_i=m_i\delta_a$, this becomes
$W(a)m_1^{1/\alpha'}m_2^{1/\beta}$. The cut $x=a$ in
\eqref{eq:doublePs} detects it, whereas a supremum restricted to $x>a$
would be zero in this pure-atom case.
If instead $\mu_2=m_2\delta_a$, the upper component has norm
\[
 m_2^{1/\beta}\left(\int_{(a,b_u)}W^\beta\dd\mu_1\right)^{1/\beta}.
\]
The strict exclusion of $a$ is essential. In the supremal form this
value is recovered by cuts approaching $a$ from the right. In the
integral form it is the contribution of the output atom itself.

The same issue occurs at an interior atom. Its lower contribution at
the cut is included, and its upper contribution at that cut is excluded.
For example, a finite ordered atomic model gives the lower-triangular
sum $\sum_{k\le j}h_k\mu_{1,k}$ and the strictly upper-triangular sum
$\sum_{k>j}W_kh_k\mu_{1,k}$. These are precisely the bracket patterns
in \eqref{eq:doublesplit}. No smoothing of an atom is used.
Singular continuous profiles are allowed as well. If $C$ is the Cantor
function and $\bar A(x)^r=c+x+\eps C(x)$ on $[0,1)$, $c,\eps>0$, the
weight $v=1/\bar A$ realizes this profile and the measure is
$c\delta_0+dx+\eps\,dC$. All three pieces enter the displayed
characteristics.

\subsection{Automatic compactness, including the initial output atom}
Assume $q<1/2$ and $B$ bounded. Thus both full integral
characteristics are finite. For the lower-tail weight $w_c=u\ind_{(a,c)}$
write $W_c(x)=\int_x^c u$ for $x<c$ and zero otherwise.
At an ordinary fixed $x>a$ the integrands in
\eqref{eq:doublePi}--\eqref{eq:doubleQi} eventually vanish. They are
bounded by the corresponding full integrands. Unlike the mixed case,
$\mu_2$ may charge $a$, so this point requires a separate check.

Write $m_i=\mu_i(\{a\})$. If $m_2=0$ there is no extra output term.
If $m_2>0$ and $m_1>0$, \eqref{eq:doublePi} forces $U(a)<\infty$.
The initial lower contribution already contains both masses and
$U(a)$. Consequently $W_c(a)=\int_a^cu\to0$. The integral of
$U^\beta$ against $\mu_2$ is finite by the same nonzero initial
factor, so dominated convergence also applies to its inner tail.
If $m_2>0$ and $m_1=0$, the lower contribution at $a$ is identically
zero. For the upper contribution, finiteness of
\eqref{eq:doubleQi} at this atom forces
\[
 \int_{(a,b_u)}U(y)^\beta\dd\mu_1(y)<\infty.
\]
As $W_c(y)\to0$ for every $y>a$, dominated convergence makes this
strict inner tail vanish. This also handles the upper contribution
when both initial masses are positive. Thus no residual atom remains.
The argument is unchanged when $a=-\infty$, interpreted in the
adjoined ordered space.

For upper tails $W_d(x)=U(\max(x,d))\to0$, including at the adjoined
point since $W_d(a)=U(d)\downarrow0$. Full integrability again
provides domination. At zero cumulative factors the contribution is
zero for every truncation, and at positive factors the full inner
integral is finite almost everywhere. We may therefore apply dominated
convergence to inner and outer integrals, with the initial cases
handled exactly as above. Both one-sided characteristics tend to zero.
By Lemma~\ref{lem:sampler} and Theorem~\ref{thm:inherited}, this proves
\eqref{eq:doubleautomatic}. The uniform norm characterization proves
\eqref{eq:doubletails} throughout $q<1$, and the same lemma gives the
local approximation estimate in the theorem.

For $q\ge1$, weighted $L^1$ kernel integration in both inputs gives
\begin{equation}\label{eq:doublekernel}
 \norm{B_w}=\esssup_{s,t\in I}
 \frac{W(\max(s,t))^{1/q}}{v_1(s)v_2(t)}.
\end{equation}
One can justify this equality first with bounded kernels on finite
measure sets and then by monotone truncation. The upper bound uses
Minkowski in $L^q(w)$, and the reverse bound uses normalized indicators
on positive-measure sets approaching kernel-section norms.
For $s,t<x$, monotonicity gives
$W(x)\le W(\max(s,t))$, yielding the lower bound by the last line
of \eqref{eq:doubleC}. Conversely take $x\downarrow\max(s,t)$ and
use continuity of $W$ at ordinary strict cuts, together with the
almost-everywhere profile inequalities. This proves equality with
$\mathcal C_d(w)$. The same local realization and inherited theorem
supply the compactness criterion in this range. The next examples
complete the sharpness assertions.

\begin{example}[Bounded point testing with a divergent Stieltjes norm]
\label{ex:divergence}
On $(0,1)$ let
\[
 v_1=1,\qquad v_2=x^{-1},\qquad u=x^{-q-1},\qquad0<q<1.
\]
Here $A_1^-=1$, $A_2^-=x$,
$\mu_1=\delta_0$, and $d\mu_2=rx^{r-1}\,dx$. Moreover
\[
 W(x)=\frac{x^{-q}-1}{q},\qquad
 W(x)^{1/q}A_1^-(x)A_2^-(x)
 =q^{-1/q}(1-x^q)^{1/q}\le q^{-1/q}.
\]
Thus the elementary point test is bounded. However, $h(0)=1$ has
$L^\alpha(\mu_1)$ norm one and $\calS_Wh=W$. Near zero,
\begin{equation}\label{eq:divergentdensity}
 W(x)^\beta\dd\mu_2(x)
 \asymp_q x^{-q\beta+r-1}\,dx=\frac{dx}{x},
 \qquad q\beta=r.
\end{equation}
The logarithmic divergence proves $\calS_Wh\notin L^\beta(\mu_2)$,
so $B$ is unbounded. For $q\ge1/2$ this is detected by
\eqref{eq:doublePs} at the initial cut. For $q<1/2$ it follows directly
from the operator reduction, hence also from the integral criterion.
Omitting the initial mass would replace $\mu_1$ by zero and lose the
entire obstruction. This example concerns failure of boundedness,
not bounded noncompactness.
\end{example}

\begin{example}[Noncompactness at the critical value]\label{ex:critical}
Take $q=1/2$, $v_1=v_2=x^{-1}$, $u=x^{-2}$ on $(0,1)$.
For nonnegative $f$ Tonelli gives
\[
 \int_0^1Hf(x)x^{-2}\,dx
 =\int_0^1f(t)(t^{-1}-1)\,dt\le\norm f_{L^1(x^{-1})}.
\]
Cauchy--Schwarz in the $q$th-power integral therefore gives
\[
 \int_0^1\sqrt{Hf(x)Hg(x)}x^{-2}\,dx
 \le\norm f_{L^1(x^{-1})}^{1/2}\norm g_{L^1(x^{-1})}^{1/2}.
\]
Squaring proves boundedness into $L^{1/2}(x^{-2})$.
Let
\[
 f_\eps=g_\eps=(\log2)^{-1}\ind_{(\eps/2,\eps)}.
\]
Both input norms are one, since
$(\log2)^{-1}\int_{\eps/2}^{\eps}dt/t=1$. For $x\ge\eps$,
$Hf_\eps(x)=Hg_\eps(x)=\eps/(2\log2)$. Therefore
\begin{equation}\label{eq:criticaldefect}
 \int_\eps^{2\eps}|B(f_\eps,g_\eps)(x)|^{1/2}x^{-2}\,dx
 =\frac1{4\log2}.
\end{equation}
Every sufficiently small lower tail contains one such interval, so
its operator norm is at least $(4\log2)^{-2}$. The positive tail
defect and Theorem~\ref{thm:inherited} give noncompactness.

More generally, with $v_1=v_2=x^{-1}$ and $u=x^{-2q-1}$,
$B$ is bounded and noncompact for every $q\ge1/2$.
Indeed the scalar $L^1(x^{-1})\to L^{2q}(x^{-2q-1})$ Hardy norm is
finite by its Banach-target point test. H\"older yields the bilinear
bound. The same inputs make the integral over $(\eps,2\eps)$ a
positive constant independent of $\eps$. Thus nonautomatic compactness
holds throughout the remaining range, without asserting that all
bounded operators there are noncompact.
\end{example}

\section{\texorpdfstring{$L^\infty$}{Linfty} inputs and scalar transfer}\label{sec:infinity}

Throughout this section $a>-\infty$ and $0<q<\infty$. The
$L^\infty$ norm is the ordinary essential supremum. This left-endpoint
restriction is a well-definedness hypothesis. On $a=-\infty$ the unit
input $f=1$ has $Hf(x)=\infty$ at every ordinary cut. A full
$L^\infty$ endpoint on that interval is consequently outside the
present theorem for a nontrivial target.

For $1\le p<\infty$ define
\begin{equation}\label{eq:scalarcompanion}
 T_pg(x)=(x-a)Hg(x),\qquad \phi_f(x)=\frac{Hf(x)}{x-a},
 \qquad\widetilde u(x)=(x-a)^qu(x).
\end{equation}
Thus $B(f,g)=\phi_fT_pg$. Multiplication by $x-a$ identifies
$H:L^p(v)\to L^q(\widetilde u)$ isometrically with
$T_p:L^p(v)\to L^q(u)$.

\begin{theorem}[Main Theorem C]\label{thm:infinity}
Assume $a>-\infty$, $1\le p<\infty$, and the local evaluation
hypotheses of Section~\ref{sec:framework}. The map
$B:L^\infty\times L^p(v)\to L^q(u)$ is bounded if and only if $T_p$
is bounded, and for every measurable output set $E$,
\begin{equation}\label{eq:setidentity}
 \norm{\ind_EB}=\norm{\ind_ET_p}.
\end{equation}
The two maps are compact simultaneously. In the bounded case,
\begin{equation}\label{eq:transfer}
 \norm B_\e=\norm{T_p}_\e,\qquad
 \distfr(B)=\distfr(T_p),\qquad\chi_q(B)=\chi_q(T_p),
\end{equation}
where scalar quantities use bounded linear approximants. For $q<1$
the complete-metric quantities agree as well. Their approximation
numbers satisfy
\begin{equation}\label{eq:approxtransfer}
 a_n(T_p)\le a_n^{(2)}(B),\qquad
 \lim_n a_n(T_p)=\lim_n a_n^{(2)}(B).
\end{equation}
For $1<p<\infty$, boundedness implies compactness when $q<p$.
For $q\ge p$ compactness is nonautomatic. This theorem explicitly
includes $p=1$. Both $(\infty,1)$ and $(1,\infty)$ reduce to
$H:L^1(v)\to L^q((x-a)^qu)$, with automatic compactness for $q<1$
and nonautomatic compactness for $q\ge1$. Input interchange gives all
symmetric assertions.
\end{theorem}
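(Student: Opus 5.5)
The proof separates into three parts: the set-norm identity, the transfer of compactness quantities, and the threshold statements.

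\emph{Set-norm identity \eqref{eq:setidentity}.} For $\norm f_\infty\le1$ we have $|Hf(x)|\le x-a$, hence $|\phi_f|\le1$. Therefore $|\ind_EB(f,g)|\le|\ind_ET_pg|$ pointwise, and $\norm{\ind_EB}\le\norm{\ind_ET_p}$. For the reverse inequality take $f=1$. Then $\phi_1\equiv1$, so $\ind_EB(1,g)=\ind_ET_pg$, and $\norm 1_\infty=1$. Boundedness equivalence is the case $E=I$. The same test $f=1$ shows the image of the unit ball under $T_p$ is contained in the image under $B$. If $B$ is compact, then $T_p$ is compact.

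\emph{Transfer of compactness quantities.} For the converse and for \eqref{eq:transfer}, I would treat each quantity in two directions.

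For lower bounds on $B$, restrict an approximant $K$ of $B$ to $f=1$. This gives a linear map $g\mapsto K(1,g)$ of the same type (compact, finite rank, or rank $<n$) with $\norm{T_p-K(1,\cdot)}\le\norm{B-K}$. This yields $\norm{T_p}_\e\le\norm B_\e$, $\distfr(T_p)\le\distfr(B)$, and $a_n(T_p)\le a_n^{(2)}(B)$. The covering radius inequality $\chi_q(T_p)\le\chi_q(B)$ follows from the image inclusion.

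For upper bounds on $B$, I would use the structure $B(f,g)=\phi_f\,T_pg$. The family $\{\phi_f:\norm f_\infty\le1\}$ is bounded by $1$ and $1$-Lipschitz in a weighted sense on each strict middle window $[c,d]$. Indeed,
\[
\phi_f'=\frac{f}{x-a}-\frac{Hf}{(x-a)^2}
\quad\text{gives}\quad
|\phi_f'|\le\frac{2}{c-a}\quad\text{on }[c,d].
\]
On such a window, let $\Pi$ be the piecewise linear nodal interpolant on a mesh of width $\delta$. Then
\[
f\mapsto\Pi\phi_f=\sum_k\phi_f(x_k)\Lambda_k
\]
is a finite-rank linear map, and $\norm{\phi_f-\Pi\phi_f}_{L^\infty(c,d)}\le 2\delta/(c-a)$.

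Given a linear approximant $L$ of $T_p$ (compact, finite rank, or of rank $<m$), define the bilinear map
\[
F(f,g)=\ind_{(c,d)}\,\Pi\phi_f\cdot Lg+R_{c,d}\bigl(\phi_f\cdot Lg\bigr).
\]
The error then splits into three pieces:
\begin{itemize}
\item $\ind_{(c,d)}(\phi_f-\Pi\phi_f)T_pg$, which is small as $\delta\to0$;
\item $\Pi\phi_f\cdot(T_p-L)g$, controlled by $\norm{T_p-L}$ since $|\Pi\phi_f|\le1$ (nodal interpolation preserves the sup bound);
\item the tail piece.
\end{itemize}
The trouble is that the tail piece is not finite rank. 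A cleaner route is to apply the inherited Theorem~\ref{thm:inherited} to both maps. From the set identity, $\norm{R_{c,d}B}=\norm{R_{c,d}T_p}$, so $\tactive(B)=\tactive(T_p)$. It then remains to verify the hypothesis that each $M_{c,d}B$ is a norm limit of finite-rank bilinear maps. Here I would use $\Pi\phi_f\cdot S g$, with $S$ a finite-rank linear sampler of $M_{c,d}T_p$. The error is bounded by $\norm{M_{c,d}T_p-S}$ plus $(2\delta/(c-a))\norm{T_p}$, using $\norm{\Pi\phi_f}_\infty\le1$ and quasi-triangle constants.

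Local finite-rank approximability of $M_{c,d}T_p$ is needed as input. For $p>1$ it would follow from the interior theory of \cite{B1}. For $p=1$ I would run the sampler argument of Lemma~\ref{lem:sampler} in linear form, using the allocation estimate with $\nu_0$. With both hypotheses in place, Theorem~\ref{thm:inherited} (and its scalar analogue) gives
\[
\norm B_\e=\chi_q(B)=\tactive(B)=\tactive(T_p)=\norm{T_p}_\e=\chi_q(T_p),
\]
the same chain for $\distfr$, equality of the limits of approximation numbers, and the $\rho$-versions for $q<1$.

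I expect the main obstacle to be this realization step: checking that the scalar localization theory applies to $T_p$ with identical quantities, and controlling the product error in the quasi-norm when $q<1$. In the quasi-norm case I would work with $q$th powers, using $|s+t|^q\le|s|^q+|t|^q$.

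\emph{Thresholds.} Via the isometry with $H:L^p(v)\to L^q(\widetilde u)$, these become classical scalar facts. For $1<p<\infty$ and $q<p$, the integral Hardy criterion has integrable characteristics, so dominated convergence kills the tails, as in Section~\ref{sec:mixed}. For $p=1$ and $q<1$, the integrability in \eqref{eq:L1scalar} does the same. Nonautomatic compactness for $q\ge p$, respectively $q\ge1$, comes from critical power weights of the form $v=x^{\gamma}$, $\widetilde u=x^{-\beta}$ on $(a,a+1)$. These give a bounded operator with a constant tail defect, detected by dilated normalized bumps as in Example~\ref{ex:mixed}. Input interchange is symmetric.
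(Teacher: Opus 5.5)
Your proposal is correct and follows essentially the paper's proof: the set identity from $|\phi_f|\le1$ and $f=1$, local realization of $M_{c,d}B$ by the nodal interpolant of $\phi_f$ times a finite-rank linear sampler of $M_{c,d}T_p$ (using the $\min(1,q)$-triangle inequality), Theorem~\ref{thm:inherited} applied to both maps with their common active tail, restriction to $f=1$ for the approximation numbers, and dominated convergence in the scalar integral criteria for the automatic regimes. Two points need tightening, though neither changes the argument: the ``scalar analogue'' you invoke is literally Theorem~\ref{thm:inherited} applied to the bilinear map $(g,z)\mapsto zT_pg$, which has the same unit-ball image and the same distances to linear approximants; and the linear sampler $\sum_k(x-a)Hg(y_{k-1})\ind_{J_k}$, combined with the point test $U(x)^{1/q}(x-a)A_p(x)\le\norm{T_p}$, works uniformly for $1\le p<\infty$, so \cite{B1} is not needed for $p>1$ (your left-endpoint critical power example is also a valid alternative to the paper's right-endpoint one).
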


The elementary estimate $|\phi_f|\le1$ on the $L^\infty$ unit ball
gives one inequality in \eqref{eq:setidentity}. The input $f=1$ gives the other.
In particular, the scalar and bilinear maps have exactly the same
active tails. To obtain the quantitative conclusions from this fact,
we still need local finite-rank realization of both maps.

\begin{lemma}[Linear nodal realization]\label{lem:nodal}
If $T_p$ is bounded, each $M_{c,d}T_p$ is an operator-norm limit of
finite-rank linear maps. Each $M_{c,d}B$ is an operator-norm limit of
finite-rank bilinear maps. More precisely, a scalar sampler of rank at
most $N$ and a nodal multiplier interpolant with $M+1$ nodes give a
bilinear map of rank at most $(M+1)N$.
\end{lemma}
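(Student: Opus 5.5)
We prove the scalar statement first, then build the bilinear one from it. Fix $a<c<d<b_u$.

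\emph{Scalar sampler.} For a partition $\mathcal P:c=x_0<\cdots<x_N=d$ put
\[
 L_{\mathcal P}g=\sum_{j=1}^N (x_{j-1}-a)Hg(x_{j-1})\ind_{J_j}.
\]
This is a linear map of rank at most $N$. For $x\in J_j$ the error splits as
\[
 (x-x_{j-1})Hg(x)+(x_{j-1}-a)\int_{x_{j-1}}^xg.
\]
Both pieces are estimated as in Lemma~\ref{lem:sampler}, using the evaluation profile $A_p$ (or $A_1^-$ when $p=1$):
\[
 |Hg(x)|\le A_p(x),\qquad \Bigl|\int_{x_{j-1}}^xg\Bigr|\le A_p(x)m_j^{1/p},\qquad m_j=\int_{J_j}|g|^pv .
\]
The scalar point test $U(x)^{1/q}(x-a)A_p(x)\le\norm{T_p}$ is the analogue of \eqref{eq:pointtest}, obtained by the same unit inputs supported before $x$. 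It gives the pointwise bound
\[
 |T_pg-L_{\mathcal P}g|^q\le C_q\norm{T_p}^qU(x)^{-1}\Bigl[\Bigl(\frac{x_j-x_{j-1}}{x-a}\Bigr)^q+m_j^{q/p}\Bigr].
\]
On the window $x-a\ge c-a>0$. Integrating against $u$ therefore gives
\[
 \norm{M_{c,d}T_p-L_{\mathcal P}}^q\le C_q\norm{T_p}^q\Bigl[(c-a)^{-q}\max_j|J_j|^q\,V+\sup_{\sum m_j\le1}\sum_j\nu_0(J_j)m_j^{q/p}\Bigr],
\]
with $V=\nu_0((c,d))$ as in \eqref{eq:gauge}. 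By \eqref{eq:allocation}, the second term tends to zero once both $\max_j|J_j|$ and $\max_j\nu_0(J_j)$ tend to zero. Such partitions exist because $\nu_0$ is finite and atomless on $(c,d)$, so we may refine a common partition for both. This proves the scalar claim, including $p=1$. As in Lemma~\ref{lem:sampler}, no Minkowski inequality in the target is needed.

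\emph{Multiplier interpolant.} On the unit ball of $L^\infty$ we have $|Hf(x)|\le x-a$ and $|Hf(x)-Hf(y)|\le|x-y|$. Hence $\phi_f(x)=Hf(x)/(x-a)$ satisfies $|\phi_f|\le1$ and is Lipschitz on $[c,d]$ with constant at most $2/(c-a)$, uniformly in $f$. Take nodes $c=y_0<\cdots<y_M=d$ with mesh $\delta$, and let $\Lambda_k$ be the piecewise linear hat functions. Define the interpolant
\[
 I_M\phi_f=\sum_{k=0}^M\frac{Hf(y_k)}{y_k-a}\Lambda_k .
\]
It is linear in $f$ through bounded functionals, and $\sup_{[c,d]}|\phi_f-I_M\phi_f|\le2\delta/(c-a)$.

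\emph{Bilinear approximant.} Set $F(f,g)=(I_M\phi_f)\,L_{\mathcal P}g$. Its outputs lie in the span of $\{\Lambda_k\ind_{J_j}\}$, so its rank is at most $(M+1)N$. These functions are bounded and supported in $(c,d)$, so they lie in $L^q(u)$ because $U(c)<\infty$. On $(c,d)$ write
\[
 M_{c,d}B(f,g)-F(f,g)=(\phi_f-I_M\phi_f)\,T_pg+(I_M\phi_f)\,(T_pg-L_{\mathcal P}g).
\]
Since $|I_M\phi_f|\le1$ (it is a convex combination of values of modulus at most $1$), the quasi-triangle inequality gives
\[
 \norm{M_{c,d}B-F}^q\le C_q\Bigl[\bigl(2\delta/(c-a)\bigr)^q\norm{T_p}^q+\norm{M_{c,d}T_p-L_{\mathcal P}}^q\Bigr].
\]
Here $\norm{T_p}<\infty$, since boundedness of $B$ gives it via \eqref{eq:setidentity} with $E=I$. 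Letting $\delta\to0$ and refining $\mathcal P$ proves the bilinear claim.

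The main obstacle is the scalar step at $p=1$. Increments of the running essential supremum need not be uniformly small, so the proof cannot rely on them. It must instead combine the mass-allocation bound \eqref{eq:allocation} with a partition that is fine both in Lebesgue length, for the factor $x-a$, and in $\nu_0$-mass. This is possible because the lower cut $c>a$ keeps $x-a$ bounded away from zero on the window.
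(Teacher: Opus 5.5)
Your proof is correct and follows the paper's route: a scalar sampler whose error is controlled by mass allocation against $\nu_0$, hat-function interpolation of $\phi_f$ with the uniform Lipschitz bound $2/(c-a)$, the product approximant, and the quasi-triangle inequality. The one difference is in the sampler. You freeze the factor $x-a$ at the left node, which costs an extra mesh term that you control through $c-a>0$ and a partition fine in both Lebesgue length and $\nu_0$-mass. The paper instead keeps $(x-a)$ live in the outputs $(x-a)\ind_{J_k}$, so only the increment term appears and only the $\nu_0$-mesh matters.
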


\begin{proof}
On $[c,d]\Subset I_u$, $Hf$ is absolutely continuous and, for almost
every $x$,
\[
 \phi_f'(x)=\frac{(x-a)f(x)-Hf(x)}{(x-a)^2},\qquad
 |\phi_f'(x)|\le\frac2{c-a}\quad(\norm f_\infty\le1).
\]
Let $c=x_0<\cdots<x_M=d$ and let $\ell_0,\ldots,\ell_M$ be the
usual nonnegative piecewise-linear hat functions. The nodal operator
\begin{equation}\label{eq:interpolant}
 I_{\mathcal P}\phi_f=\sum_{j=0}^M\phi_f(x_j)\ell_j
\end{equation}
is linear in $f$, contractive in the supremum norm, and satisfies
\begin{equation}\label{eq:interpolationerror}
 \sup_{\norm f_\infty\le1}\norm{I_{\mathcal P}\phi_f-\phi_f}_\infty
 \le\frac{2|\mathcal P|}{c-a}=:\eps_{\mathcal P}\longrightarrow0.
\end{equation}
This follows by writing the interpolant on each cell as a convex
combination of its endpoint values and using the Lipschitz bound.
The choice of approximant is a fixed linear operation on each input.

For the scalar map, the evaluation test gives
\begin{equation}\label{eq:scalarpoint}
 U(x)^{1/q}(x-a)A_p(x)\le\norm{T_p},
\end{equation}
with $A_1^-$ when $p=1$. Indeed, for a nonnegative $g$ supported before
$x$ and $y\ge x$, $(y-a)Hg(y)\ge(x-a)Hg(x)$.
Choose a partition $c=y_0<\cdots<y_N=d$, $J_k=(y_{k-1},y_k)$, with
maximal $\nu_0$-mass $\delta_N\to0$, where $\nu_0$ is
\eqref{eq:gauge}. Put
\begin{equation}\label{eq:scalarsampler}
 R_Ng(x)=\sum_{k=1}^N(x-a)Hg(y_{k-1})\ind_{J_k}(x).
\end{equation}
This is a bounded linear operator of rank at most $N$. For a unit input
let $m_k=\int_{J_k}|g|^pv$. Using the scalar evaluation inequality on
the increment and \eqref{eq:scalarpoint},
\[
 |T_pg(x)-R_Ng(x)|
 \le\norm{T_p}U(x)^{-1/q}m_k^{1/p}\qquad(x\in J_k).
\]
Therefore
\begin{equation}\label{eq:scalarerror}
 \norm{M_{c,d}T_pg-R_Ng}_q^q
 \le\norm{T_p}^q\sum_k\nu_0(J_k)m_k^{q/p},\qquad\sum_km_k\le1.
\end{equation}
The allocation calculation \eqref{eq:allocation} bounds the last sum
by $\delta_N^{q/p}V^{1-q/p}$ when $q<p$, and by $\delta_N$ when
$q\ge p$. Both tend to zero. This proves the scalar assertion, including
$p=1$ and $q<1$.

Extend the window functions by zero and define
\begin{equation}\label{eq:productapproximant}
 F_{\mathcal P,N}(f,g)=(I_{\mathcal P}\phi_f)(R_Ng).
\end{equation}
Each coefficient is linear in its corresponding input, so this map is
bilinear. Its output span is contained in
\[
 \operatorname{span}\{\ell_j(x)(x-a)\ind_{J_k}(x):
                 0\le j\le M,\ 1\le k\le N\}.
\]
All these functions belong to $L^q(u)$ on the fixed window, and the
rank is at most $(M+1)N$ (linear dependencies can only lower it).
The decomposition
\[
 M_{c,d}B-F_{\mathcal P,N}
 =(\phi_f-I_{\mathcal P}\phi_f)M_{c,d}T_pg
 +(I_{\mathcal P}\phi_f)(M_{c,d}T_pg-R_Ng)
\]
and $\vartheta=\min(1,q)$ give
\begin{equation}\label{eq:producterror}
 \norm{M_{c,d}B-F_{\mathcal P,N}}^\vartheta
 \le\eps_{\mathcal P}^\vartheta\norm{T_p}^\vartheta
       +\norm{M_{c,d}T_p-R_N}^\vartheta\longrightarrow0.
\end{equation}
Only multiplication by uniformly bounded functions and the valid
$\vartheta$-triangle inequality are used. This proves the lemma.
\end{proof}

\begin{proof}[Completion of Theorem~\ref{thm:infinity}]
Lemma~\ref{lem:nodal} verifies the local hypotheses. Apply
Theorem~\ref{thm:inherited} to $B$ and to the bilinear representation
$(g,z)\mapsto zT_pg$, $z\in\mathbb R$ or $\mathbb C$. This scalar
representation has the same unit-ball image as $T_p$. Restrictions at
$z=1$ and extensions by multiplication with $z$ identify compact and
finite-rank linear approximants with their bilinear counterparts and
preserve their distances. The common tails from
\eqref{eq:setidentity} therefore give compactness equivalence and
\eqref{eq:transfer}.
For any rank-$<n$ bilinear approximant $F$ to $B$, the map
$g\mapsto F(1,g)$ is linear of rank $<n$, and
\[
 \norm{T_p-F(1,\cdot)}\le\norm{B-F}.
\]
This proves the inequality in \eqref{eq:approxtransfer}. Its limit
statement follows from the inherited finite-rank limit for the two
locally realized maps. No termwise equality is inferred.

It remains to identify the automatic regimes using scalar background.
For $1<p<\infty$ and $q<p$, set $s=pq/(p-q)$ and
$\widetilde U(x)=\int_x^{b_u}\widetilde u$. The scalar Hardy norm is
equivalent to
\begin{equation}\label{eq:scalarintegral}
 \left(\int_{I_u}\widetilde U(x)^{s/p}
                 A_p(x)^s\widetilde u(x)\,dx\right)^{1/s}.
\end{equation}
For $p=1$, $q<1$, its criterion is
\begin{equation}\label{eq:scalaroneintegral}
 \left(\int_{I_u}(A_1^-(x)\widetilde U(x))^{q/(1-q)}
                   \widetilde u(x)\,dx\right)^{(1-q)/q}.
\end{equation}
These are the classical density criteria
\cite{SinnamonStepanov1996,Santacruz2024}. For a shrinking output tail,
replace $\widetilde u$ by $\widetilde u\ind_E$ and its cumulative by
$\widetilde U_E\le\widetilde U$. The full integrable expression
bounds the truncated integrand. The outer density factor tends to zero
almost everywhere. Dominated convergence proves vanishing scalar tails.
The local scalar and bilinear realizations then prove automatic
compactness as stated. For $q\ge p>1$ the scalar boundedness criterion
is the usual supremum of
$\widetilde U(x)^{1/q}A_p(x)$. For $p=1$, $q\ge1$ it is the exact
kernel-section supremum. Compactness requires their truncated tail
versions to vanish. The example below shows that it is nonautomatic.
\end{proof}

\begin{corollary}[Two ordinary $L^\infty$ inputs]\label{cor:doubleinfty}
If $a>-\infty$, then
\begin{equation}\label{eq:doubleinfnorm}
 \norm{B:L^\infty\times L^\infty\to L^q(u)}
 =\left(\int_I(x-a)^{2q}u(x)\,dx\right)^{1/q}.
\end{equation}
For every finite $q>0$, this map is bounded if and only if it is compact.
In the bounded case all the distances and noncompactness quantities
in \eqref{eq:transfer} are zero.
\end{corollary}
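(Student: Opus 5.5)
The plan is to compute the norm directly and then show that the tails of the operator norm vanish by dominated convergence.

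\emph{Norm identity.} On the product of the $L^\infty$ unit balls we have $|Hf(x)|\le x-a$ and $|Hg(x)|\le x-a$. Hence $|B(f,g)|^q u\le(x-a)^{2q}u$ pointwise, which gives the upper bound in \eqref{eq:doubleinfnorm}. The inputs $f=g=1$ attain the bound exactly, so the norm identity holds with no constants. Alternatively, one can apply Theorem~\ref{thm:infinity} with the second input of type $L^\infty$. The direct computation is cleaner, however, because Theorem~\ref{thm:infinity} was stated only for finite $p$ in the second slot.

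\emph{Boundedness implies compactness.} The same pointwise estimate, applied after multiplying by $\ind_E$, gives the set identity
\[
\norm{\ind_E B}=\left(\int_E(x-a)^{2q}u\right)^{1/q}
\]
for every measurable $E$, with equality again attained at $f=g=1$. Take $E=(a,c)\cup(d,b_u)$. If the full integral is finite, dominated (or monotone) convergence gives $\norm{R_{c,d}B}\to0$ as $c\downarrow a$ and $d\uparrow b_u$, so $\tactive(B)=0$.

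\emph{Local realization.} Apply Theorem~\ref{thm:inherited} with both inputs normed. This requires each $M_{c,d}B$ to be a norm limit of finite-rank bilinear maps. For this I would reuse the nodal interpolant of Lemma~\ref{lem:nodal} in both variables and take
\[
F(f,g)=\ind_{(c,d)}(x-a)^2\,(I_{\mathcal P}\phi_f)(I_{\mathcal P}\phi_g).
\]
This map is bilinear and has rank at most $(M+1)^2$.

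\emph{Error bound.} Write the error as
\[
(\phi_f-I\phi_f)\phi_g+(I\phi_f)(\phi_g-I\phi_g),
\]
multiplied by $(x-a)^2\ind_{(c,d)}$. Each term is bounded uniformly by $\eps_{\mathcal P}$, because the interpolant is contractive and $|\phi|\le1$. The $\vartheta$-triangle inequality with $\vartheta=\min(1,q)$ then gives
\[
\norm{M_{c,d}B-F}^\vartheta\le 2\,\eps_{\mathcal P}^\vartheta\left(\int_c^d(x-a)^{2q}u\right)^{\vartheta/q}\to0.
\]

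\emph{Conclusion.} Theorem~\ref{thm:inherited} now yields compactness. It also gives $\norm B_\e=\chi_q(B)=\distfr(B)=\tactive(B)=0$, and $\chi_\rho(B)=0$ when $q<1$. The converse direction, that compactness implies boundedness, is trivial.

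\emph{Main obstacle.} The only delicate point is checking that Theorem~\ref{thm:inherited} applies. Its inputs must be normed, which $L^\infty$ is. The approximants must be bounded bilinear maps, which holds because $f\mapsto\phi_f(x_j)$ is a bounded linear functional on $L^\infty$. I expect no genuine difficulty: the essential-supremum convention makes the extremal input $f=g=1$ available, and this is what forces the equality in \eqref{eq:doubleinfnorm} and the automatic vanishing of the tails.
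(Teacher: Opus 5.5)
Your proposal is correct and follows the paper's proof: the envelope $(x-a)^2$ together with the extremal inputs $f=g=1$ gives the exact norm, the integrable envelope makes both operator tails vanish, and a product of fixed linear nodal interpolants supplies the local finite-rank realization before Theorem~\ref{thm:inherited} is applied. The only cosmetic difference is that the paper interpolates the primitives $Hf,Hg$ directly, which are uniformly Lipschitz on the window, whereas you interpolate the normalized multipliers $\phi_f,\phi_g$ and reinsert the factor $(x-a)^2$; both give the same uniform error control.
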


\begin{proof}
The envelope $|HfHg|\le(x-a)^2$ on the unit balls proves the upper
norm bound, and $f=g=1$ gives equality. On a strict window both
primitive families are bounded and uniformly Lipschitz. Applying fixed
linear nodal interpolation to each primitive and multiplying the
interpolants produces finite-rank bilinear maps, with output span
contained in the products of their hat functions. Uniform convergence
on the window implies convergence in $L^q(u)$ because $U(c)<\infty$.
If \eqref{eq:doubleinfnorm} is finite, its integrable envelope makes the
two operator tails vanish. Theorem~\ref{thm:inherited} completes the
proof.
\end{proof}

\begin{example}[Nonautomatic scalar faces]\label{ex:scalarfaces}
On $(0,1)$ let $p\ge1$, $q\ge p$, and take
\[
 v(x)=(1-x)^{2p-1},\qquad
 \widetilde u(x)=(1-x)^{q-1},\qquad
 u(x)=x^{-q}(1-x)^{q-1}.
\]
For $p>1$, $A_p(x)\asymp(1-x)^{-1}$ near $1$. For $p=1$,
$A_1^-(x)=(1-x)^{-1}$. Also
$\widetilde U(x)^{1/q}=q^{-1/q}(1-x)$.
The scalar supremal Hardy test is finite on $(0,1)$ and stays bounded
away from zero near $1$. For every upper output cutoff, cuts lying
above it retain this lower bound on the operator norm. The scalar
operator and hence the bilinear $L^\infty$ face are bounded but
noncompact. This includes both mixed $L^1/L^\infty$ corners when $p=1$.
\end{example}

\section{Quantitative boundary synthesis}\label{sec:synthesis}

All boundary realizations have now been proved. Lemma~\ref{lem:sampler}
handles the finite-input faces, Lemma~\ref{lem:nodal} the faces with one
$L^\infty$ input, and Corollary~\ref{cor:doubleinfty} the remaining
corner. Their quantitative conclusion is a direct application of the
localization infrastructure, not an additional abstract theorem.

\begin{corollary}[Exact quantities on the input boundary]\label{cor:synthesis}
Let $1\le p_1,p_2\le\infty$, with at least one input exponent equal to
$1$ or $\infty$, and let $0<q<\infty$. Assume the endpoint setting of
Section~\ref{sec:framework}, including $a>-\infty$ on every
$L^\infty$ face. If $B:X_1\times X_2\to L^q(u)$ is bounded, then
\begin{equation}\label{eq:masteridentity}
 \norm B_\e=\distfr(B)=\chi_q(B)=\tactive(B).
\end{equation}
Moreover,
\begin{equation}\label{eq:masterlimit}
 a_n^{(2)}(B)\downarrow\tactive(B),\qquad
 B\text{ compact}\ \Longleftrightarrow\ \tactive(B)=0.
\end{equation}
For $0<q<1$ the complete-metric radius is
\begin{equation}\label{eq:mastermetric}
 \chi_\rho(B)=\tactive(B)^q.
\end{equation}
\end{corollary}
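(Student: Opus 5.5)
The plan is to apply Theorem~\ref{thm:inherited} directly to $T=B$ on $I_u$. The only work is to check its two hypotheses on every input-boundary cell: the input spaces must be normed, and every strict-middle map $M_{c,d}B$ must be an operator-norm limit of finite-rank bilinear maps. The first hypothesis is immediate, since each $X_i=L^{p_i}(v_i)$ with $1\le p_i\le\infty$ is a Banach space; only the target may be quasi-Banach, and the inherited theorem allows that. Boundedness of $B$ is assumed. By Proposition~\ref{prop:profiles}, boundedness also gives local admissibility and $0<U(x)<\infty$ at every strict cut, which the realization lemmas need. If $u=0$ almost everywhere, every quantity vanishes by convention, so I may assume $u$ is nontrivial.

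For the local realization I split the boundary cells into three cases.
\begin{enumerate}
\item Both exponents are finite and at least one equals $1$. This covers $L^1\times L^p$, $L^p\times L^1$ and $L^1\times L^1$. Lemma~\ref{lem:sampler} applies verbatim: the samplers $S_{\mathcal P}$ over partitions with vanishing maximal $\nu_0$-mass converge to $M_{c,d}B$ in operator norm for every $q>0$.
\item Exactly one exponent is $\infty$, with $a>-\infty$, and the other is $p\in[1,\infty)$. By symmetry under input interchange I place the $L^\infty$ input first. Theorem~\ref{thm:infinity} (via \eqref{eq:setidentity}) gives that $T_p$ is bounded, and then Lemma~\ref{lem:nodal} gives the products $F_{\mathcal P,N}$, which are finite-rank bilinear approximants of $M_{c,d}B$. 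Interchanging the inputs transposes these approximants and preserves both rank and norm.
\item Both exponents are $\infty$. The proof of Corollary~\ref{cor:doubleinfty} already produces finite-rank products of nodal interpolants converging on each strict window.
\end{enumerate}

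With the hypotheses verified, Theorem~\ref{thm:inherited} yields every conclusion at once:
\begin{itemize}
\item $\norm B_\e=\chi_q(B)=\tactive(B)$ together with the compactness equivalence;
\item $\distfr(B)=\lim_n a_n^{(2)}(B)=\tactive(B)$;
\item $\chi_\rho(B)=\tactive(B)^q$ when $q<1$.
\end{itemize}
Monotone decrease of $a_n^{(2)}(B)$ in $n$ is trivial, since the admissible rank classes increase with $n$. This gives the arrow $\downarrow$ in \eqref{eq:masterlimit}.

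The main obstacle is bookkeeping rather than analysis. I must make sure the case split is exhaustive over the eight boundary cells. I also need the quasi-Banach target to be handled inside the realization lemmas, not by a Minkowski inequality in $L^q(u)$. Both lemmas were proved with the $\vartheta$-triangle inequality or the allocation bound, so this holds. The remaining check is that the identification of $L^q(u;I_u)$ with $L^q(u;I)$ leaves the quantities unchanged, which it does because that identification is an isometry.
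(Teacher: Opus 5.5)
Your proposal is correct and follows the paper's proof. The paper likewise verifies the local finite-rank hypothesis of Theorem~\ref{thm:inherited} through Lemma~\ref{lem:sampler} on the finite-input faces, Lemma~\ref{lem:nodal} on the faces with one $L^\infty$ input, and Corollary~\ref{cor:doubleinfty} at the double-$L^\infty$ corner, and then applies that theorem on $I_u$; your extra remarks (boundedness of $T_p$ via \eqref{eq:setidentity}, monotonicity of $a_n^{(2)}$ giving the arrow $\downarrow$, and the isometric identification of the targets) only make explicit what the paper leaves implicit.
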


\begin{proof}
The local finite-rank constructions just cited verify the hypotheses
of Theorem~\ref{thm:inherited}, that is,
\cite[Theorems 5.1 and 5.3]{B1}. Applying it on $I_u$ gives all the
assertions with the radius conventions of Section~\ref{sec:framework}.
\end{proof}

To connect the exact identity to the weight formulas, let
\[
 d_a=\lim_{c\downarrow a}\norm{\ind_{(a,c)}B},\qquad
 d_b=\lim_{d\uparrow b_u}\norm{\ind_{(d,b_u)}B}.
\]
The inherited tail comparison reads
\begin{equation}\label{eq:tailsandwich}
 \max(d_a,d_b)\le\tactive(B)\le(d_a^q+d_b^q)^{1/q}.
\end{equation}
On the mixed and double-$L^1$ faces, the characteristics in
Theorems~\ref{thm:mixed} and \ref{thm:double}, evaluated at the
truncated output weights, estimate these defects with constants
independent of the cuts. They also estimate the coupled tail directly
by taking $w=u\ind_{(a,c)\cup(d,b_u)}$. Exactness in
\eqref{eq:masteridentity} refers to the operator-norm coupled tail.
It does not turn equivalent weight characteristics into exact norms.
On the $L^\infty$ faces, \eqref{eq:setidentity} gives an exact scalar
tail instead.

\subsection{The eight boundary cells}
Table~\ref{tab:phase} collects the automatic-compactness regimes.
Here ``nonautomatic'' means that bounded noncompact examples exist,
while individual bounded operators are compact precisely when their
active tails vanish. In the rows containing $p$, $1<p<\infty$.

\begin{table}[htbp]
\centering
\caption{Finite-$q$ input boundary. All $L^\infty$ rows require $a>-\infty$.}
\label{tab:phase}
\small
\setlength{\tabcolsep}{4pt}
\begin{tabular}{@{}cllll@{}}
\toprule
Cell & Inputs & Automatic & Nonautomatic & Mechanism\\
\midrule
E1 & $(1,p)$ & $q<p/(p+1)$ & $q\ge p/(p+1)$ & Mixed Stieltjes\\
E2 & $(p,1)$ & $q<p/(p+1)$ & $q\ge p/(p+1)$ & Symmetry\\
E3 & $(1,1)$ & $q<1/2$ & $q\ge1/2$ & Double Stieltjes\\
E4 & $(\infty,p)$ & $q<p$ & $q\ge p$ & Scalar transfer\\
E5 & $(p,\infty)$ & $q<p$ & $q\ge p$ & Symmetry\\
E6 & $(\infty,\infty)$ & All finite $q$ & None & Explicit norm\\
E7 & $(1,\infty)$ & $q<1$ & $q\ge1$ & Scalar $L^1$\\
E8 & $(\infty,1)$ & $q<1$ & $q\ge1$ & Symmetry\\
\bottomrule
\end{tabular}
\end{table}

Except at the double-$L^\infty$ corner, these boundaries can be written
as the familiar harmonic relation
\[
 q_c=\left(\frac1{p_1}+\frac1{p_2}\right)^{-1},\qquad\frac1\infty=0.
\]
The relation itself is not a new exponent law. The conclusions here
identify when boundedness forces compactness on the input boundary,
and when the endpoint-tail obstruction can persist. At
$(\infty,\infty)$, the reciprocal sum is zero and
Corollary~\ref{cor:doubleinfty} gives automatic compactness for every
finite target exponent.

The use of $b_u$ can matter even on a finite geometric interval.
Extend the weights of Example~\ref{ex:scalarfaces} from $(0,1)$ to
$(0,2)$ by arbitrary positive finite input weights and set $u=0$ on
$(1,2)$. The active endpoint is still $1$, and the noncompactness
concentration still occurs immediately below $1$. Conditions taken
only near the geometric endpoint $2$ would miss it. Conversely no
condition on the input primitives beyond $1$ is required by this
output norm.

\subsection{Relation to the interior theory and scope}
Kanjilal \cite{B1} supplies the interior theory for
$1<p_1,p_2<\infty$. Together with the boundary results above, this yields
finite-$q$ localization and quantitative identities over the closed
input square under the applicable hypotheses of the two papers. The
present paper supplies the input-boundary results. The proofs establishing
the boundary hypotheses are the Stieltjes reductions and local
realizations developed here.

The analysis is restricted to finite $q$. The case $q=\infty$ is excluded.
A full $L^\infty$ input with $a=-\infty$ is not covered without
additional structure. More general kernels and higher multilinearity
are outside the scope of this article.

\section*{Declarations}
\paragraph{Data availability.}
No empirical dataset is used in this mathematical study.
\paragraph{Funding.}
The author received no funding for this work.
\paragraph{Competing interests.}
The author declares no competing interests.
\paragraph{Generative AI disclosure.}
OpenAI's ChatGPT, including ChatGPT Work, and Codex were used as assistive tools during manuscript preparation for mathematical consistency checking, language refinement, readability improvement, and LaTeX preparation. The author independently reviewed the mathematical arguments, references, and final manuscript and takes full responsibility for all statements, proofs, and conclusions.

\bibliographystyle{amsplain}
\bibliography{references}
\end{document}